\font\tencyr=wncyr10
\def\cyr{\cyracc\tencyr}
\begin{document}

\title{Quantum Quasi-Shuffle Algebras}

\author{Run-Qiang Jian}

\address{D\'{e}partement de Math\'{e}matiques, Universit\'{e} Paris
Diderot (Paris 7), 175, rue du Chevaleret, 75013, Paris, France}
\email{jian@math.jussieu.fr}

\address{Department of Mathematics, Sun Yat-Sen University,
135, Xingang Xi Road, 510275, Guangzhou, P. R. China}

\curraddr {Department of Mathematics, DongGuan University of
Technology, 1, Daxue Road, Songshan Lake, 523808, Dongguan, P. R.
China}
\thanks{}

\author{Marc Rosso}
\address{D\'{e}partement de Math\'{e}matiques, Universit\'{e} Paris
Diderot (Paris 7), 175, rue du Chevaleret, 75013, Paris, France}
\email{rosso@math.jussieu.fr}
\author{Jiao Zhang}
\address{Department of Mathematics, East China Normal
University, Shanghai, P. R. China}
\curraddr{D\'{e}partement de Math\'{e}matiques, Universit\'{e}
Paris Diderot (Paris 7), 175, rue du Chevaleret, 75013, Paris,
France} \email{zhangjiao@math.jussieu.fr}
\thanks{}


\date{}


\keywords{Quantum quasi-shuffle algebra, connected twisted
Yang-Baxter bialgebra, Lyndon word.}\maketitle

\begin{abstract}
We establish some properties of quantum quasi-shuffle algebras.
They include the  necessary  and sufficient condition for the
construction of the quantum quasi-shuffle product, the universal
property, and the commutativity condition. As an application, we
use the quantum quasi-shuffle product to construct a linear basis
of $T(V)$,  for a special kind of Yang-Baxter algebras
$(V,m,\sigma)$.
\end{abstract}

\newtheorem{theorem}{Theorem}[]
\newtheorem{lemma}[theorem]{Lemma}
\newtheorem{proposition}[theorem]{Proposition}
\newtheorem{definition}[theorem]{Definition}
\newtheorem{corollary}[theorem]{Corollary}
\newtheorem{remark}[theorem]{Remark}

\section{Introduction}
Quasi-shuffle algebras are a generalization of shuffle algebras.
They first arose in \cite{NR} for the study of the cofree
irreducible Hopf algebra built on an associative algebra. There,
K. Newman and D. E. Radford constructed an associative algebra
structure on $T(U)$, for an algebra $U$, by combining the
multiplication of $U$ and the shuffle product of $T(U)$. These
algebras have their particular interest in many branches of
algebra and a number of applications have been found in the past
decade. For example, they are used in the study of commutative
TriDendriform algebras \cite{Lod}, Rota-Baxter algebras \cite{EG},
and multiple zeta values \cite{Hof}.

After the birth of quantum groups, many algebraic objects were
better understood in the more general framework of  braided
categories. For example, shuffle algebras, special examples of
quasi-shuffle algebras, had been quantized in \cite{Ro} ten years
ago, and led to a more intrinsic understanding of quantum
enveloping algebras. The next task was to find a suitable way to
quantize the quasi-shuffle algebra. There were some attempts, for
example, \cite{B} and \cite{Hof}. For a braided vector space
$(V,\sigma)$, in order to study all associative algebra structures
on $T(V)$ which are compatible with the "twisted" deconcatenation
coproduct, \cite{JR} introduced the notion of quantum
$B_\infty$-algebras. The quantum $B_\infty$-algebra provides a
suitable framework for the quantization of quasi-shuffle algebras
in the spirit of quantum shuffle algebras (\cite{Ro}), by
replacing the usual flip with a braiding. The resulting algebras,
called quantum quasi-shuffle algebras, are the generalization of
quantum shuffle algebras and provide Yang-Baxter algebras. Because
of the importance of quasi-shuffle algebras, it seems quite
reasonable to study quantum quasi-shuffle algebras for themselves
 as  new algebraic objects, not just as special quantum
$B_\infty$-algebras. This paper is the first step in this
direction. As a starting point, we expect that the quantum
quasi-shuffle algebra can inherit some good properties of the
classical one, or have some "q-analogues" of those in the
classical case. We first investigate when and how we can construct
the quantum quasi-shuffle product on the tensor space $T(V)$ over
a braided vector space $(V,\sigma)$. Universal properties always
play an important role in the study of algebras. So we provide the
universal property of quantum quasi-shuffle algebras in a suitable
category. We also study the commutativity of the algebra and
present a linear basis of $T(V)$ for a special kind of Yang-Baxter
algebras $(V,m,\sigma)$ by using Lyndon words.

This paper is organized as follows. In Section 2, we recall the
construction of quantum quasi-shuffle algebras and study the
 necessary and sufficient condition for the construction. In Section 3, we provide a
universal property of quantum quasi-shuffle algebras in the
category of connected twisted Yang-Baxter bialgebras and discuss
the commutativity of quantum quasi-shuffle algebras. In Section 4,
for a special kind of Yang-Baxter algebras $(V,m,\sigma)$, we
provide a linear basis of $T(V)$ by using the quantum
quasi-shuffle product and Lyndon words.

\section*{Notations}

In this paper, we denote by $K$ a ground field of characteristic
0. All the objects we discuss are defined over $K$.

The symmetric group of $n$ letters $\{1,2,\ldots,n\}$ is written
by $\mathfrak{S}_{n}$. An $(i,j)$-shuffle is an element $w\in
\mathfrak{S}_{i+j}$ such that $w (1) < \cdots <w (i)$ and $w (i+1)
< \cdots <w (i+j)$. We denote by $\mathfrak{S}_{i,j}$ the set of
all $(i,j)$-shuffles.

A braiding $\sigma$ on a vector space $V$ is an invertible linear
map in $\mathrm{End}(V\otimes V)$ satisfying the quantum
Yang-Baxter equation on $V^{\otimes 3}$: $$(\sigma\otimes
\mathrm{id}_{V})(\mathrm{id}_{V}\otimes \sigma)(\sigma\otimes
\mathrm{id}_{V})=(\mathrm{id}_{V}\otimes \sigma)(\sigma\otimes
\mathrm{id}_{V})(\mathrm{id}_{V}\otimes \sigma).$$

A braided vector space $(V,\sigma)$ is a vector space $V$ equipped
with a braiding $\sigma$. For any $n\in \mathbb{N}$ and $1\leq
i\leq n-1$, we denote by $\sigma_i$ the operator
$\mathrm{id}_V^{\otimes(i-1)}\otimes \sigma\otimes
\mathrm{id}_V^{\otimes(n-i-1)}\in \mathrm{End}(V^{\otimes n})$.
For any $w\in \mathfrak{S}_{n}$, we denote by $T_w$ the
corresponding lift of $w$ in the braid group $B_n$, defined as
follows: if $w=s_{i_1}\cdots s_{i_l}$ is any reduced expression of
$w$, where $s_{i}=(i,i+1)$, then $T_w=\sigma_{i_1}\cdots
\sigma_{i_l}$. This definition is well-defined (see, e.g., Theorem
4.12 in \cite{KT}). Sometimes we also use the notation
$T_w^\sigma$ to indicate the action of $\sigma$.

The usual flip switching two factors is denoted by $\tau$. For a
vector space $V$, we denote by $\otimes$ the tensor product within
$T(V)$, and by $\underline{\otimes}$ the one between $T(V)$ and
$T(V)$ respectively.

\section{Quantum quasi-shuffle algebras} We start by recalling some definitions. In the following, all algebras are
assumed to be associative and unital.
\begin{definition}[\cite{HH}]1. Let $A=(A,m)$ be an algebra with product $m$ and unit $1_A$, and $\sigma$ be a braiding on $A$. We call $(A,m,\sigma)$ a \emph{Yang-Baxter algebra} (YB algebra for short) if it satisfies the following conditions:
\[\left\{
\begin{array}{lll}
(\mathrm{id}_A\otimes m)\sigma_1\sigma_2&=&\sigma( m\otimes \mathrm{id}_A),\\
( m\otimes
\mathrm{id}_A)\sigma_2\sigma_1&=&\sigma(\mathrm{id}_A\otimes m),
\end{array} \right.
\]
and for any $a\in A$,
\[\left\{
\begin{array}{lll}
\sigma(1_A\otimes a)&=&a\otimes 1_A,\\
\sigma(a\otimes 1_A)&=&1_A\otimes a.
\end{array} \right.
\]

2. Let $C=(C,\bigtriangleup ,\varepsilon)$ be a coalgebra with
coproduct $\bigtriangleup$ and counit $\varepsilon$, and $\sigma$
be a braiding on $C$. We call $(C,\bigtriangleup,\sigma)$ a
\emph{Yang-Baxter coalgebra} (YB coalgebra for short) if it
satisfies the following conditions:
\[\left\{
\begin{array}{lll}
\sigma_1\sigma_2 (\bigtriangleup\otimes \mathrm{id}_C)&=& (\mathrm{id}_C\otimes \bigtriangleup)\sigma,\\
\sigma_2\sigma_1(\mathrm{id}_C\otimes
\bigtriangleup)&=&(\bigtriangleup\otimes \mathrm{id}_C)\sigma,
\end{array} \right.
\]
and \[\left\{
\begin{array}{lll}
(\mathrm{id}_C\otimes
\varepsilon )\sigma&=&\varepsilon\otimes \mathrm{id}_C,\\
(\varepsilon \otimes \mathrm{id}_C )\sigma&=&\mathrm{id}_C\otimes
\varepsilon .
\end{array} \right.
\]
\end{definition}

These definitions give a right way to generalize the usual algebra
(resp. coalgebra) structure on the tensor products of algebras
(resp. coalgebras) in the following sense.

\begin{proposition}[\cite{HH}, Proposition 4.2]1.
For a YB algebra $(A,m,\sigma)$ and any $i\in \mathbb{N}$,
$(A^{\otimes i},m_{\sigma,i}, T^\sigma_{\chi_{ii}})$ becomes a YB
algebra with product $m_{\sigma,i}=m^{\otimes i}\circ
T^\sigma_{w_i}$ and unit $1_ A^{\otimes i}$, where
$\chi_{ii},w_i\in \mathfrak{S}_{2i}$ are given by
\[\chi_{ii}=\left(\begin{array}{cccccccc}
1&2&\cdots&i&i+1&i+2&\cdots & 2i\\
i+1&i+2&\cdots&2i&1& 2 &\cdots & i
\end{array}\right),\] and \[w_{i}=\left(\begin{array}{ccccccccc}
1&2&3&\cdots&i&i+1&i+2&\cdots & 2i\\
1&3&5&\cdots&2i-1&2& 4 &\cdots & 2i
\end{array}\right).\]

2. For a YB coalgebra $(C, \bigtriangleup,\sigma)$, $(C^{\otimes
i},\bigtriangleup_{\sigma,i}, T^\sigma_{\chi_{ii}})$ becomes a YB
coalgebra with coproduct $\bigtriangleup_{\sigma,i}=
T^\sigma_{w_i^{-1}}\circ\bigtriangleup^{\otimes i}$ and counit
$\varepsilon^{\otimes i}:C^{\otimes i}\rightarrow K^{\otimes
i}\simeq K$.\end{proposition}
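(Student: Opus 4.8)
The plan is to verify directly that the proposed product $m_{\sigma,i}=m^{\otimes i}\circ T^\sigma_{w_i}$ on $A^{\otimes i}$ together with the braiding $T^\sigma_{\chi_{ii}}$ satisfies the three axioms of a YB algebra from Definition 1.1, since part 2 then follows by dualizing all arrows (reversing products/coproducts, swapping $m\leftrightarrow\Delta$, $1\leftrightarrow\varepsilon$, and replacing each braid lift $T^\sigma_w$ by $T^\sigma_{w^{-1}}$, which is exactly the pattern relating $w_i$ to $w_i^{-1}$ and leaves $\chi_{ii}$ invariant up to a harmless relabelling). The unit axioms are the easy part: the statements $T^\sigma_{\chi_{ii}}(1_A^{\otimes i}\otimes a)=a\otimes 1_A^{\otimes i}$ and the symmetric one follow from the unit axioms for $\sigma$ on $A$, because $\chi_{ii}$ is the block transposition and its braid lift is a product of $\sigma_j$'s each of which, when applied across a factor equal to $1_A$, just slides that factor past, using $\sigma(1_A\otimes a)=a\otimes 1_A$ repeatedly; likewise $m_{\sigma,i}$ has unit $1_A^{\otimes i}$ because $T^\sigma_{w_i}$ moves the tensor factors into the interleaved order where $m^{\otimes i}$ can absorb units one block at a time.

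The substantive part is the two compatibility identities
\[
(\mathrm{id}_{A^{\otimes i}}\otimes m_{\sigma,i})\,(T^\sigma_{\chi_{ii}})_1\,(T^\sigma_{\chi_{ii}})_2 = T^\sigma_{\chi_{ii}}\,(m_{\sigma,i}\otimes \mathrm{id}_{A^{\otimes i}})
\]
on $A^{\otimes 3i}$, and its mirror image. Here the subscripts on $T^\sigma_{\chi_{ii}}$ refer to its action on the first two, resp. last two, blocks of $i$ tensor factors. My strategy is to translate everything into the braid group $B_{3i}$ (or rather into the operators on $A^{\otimes 3i}$ built from $\sigma$): both sides are of the form $m^{\otimes\text{something}}\circ(\text{a specific braid lift})$, and since the braid lifts $T^\sigma_w$ are well-defined functions of the permutation $w$ and $m$ is associative, it suffices to check (a) that the two underlying permutations of $\mathfrak S_{3i}$ coincide, and (b) that the ``collision pattern'' of strands entering each copy of $m$ matches, so that one may legitimately replace the honest permutation by the braid lift via the relation $(\mathrm{id}\otimes m)\sigma_1\sigma_2=\sigma(m\otimes\mathrm{id})$ and its iterates. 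Concretely I would first use the YB-algebra axiom for $(A,m,\sigma)$ to establish the ``$i=1$ shaped'' sliding relations
\[
(\mathrm{id}_A^{\otimes j}\otimes m)\,T^\sigma_{?}=T^\sigma_{?}\,(m\otimes\mathrm{id}_A^{\otimes j}),
\]
then bootstrap to arbitrary blocks by induction on $i$, peeling off one application of $m$ at a time from $m_{\sigma,i}=m\circ(m_{\sigma,i-1}\otimes\mathrm{id})\circ(\text{braid})$ — or more cleanly, record $m_{\sigma,i}$ as an iterated product and push the outermost $m$ through $T^\sigma_{\chi_{ii}}$ using the single-$m$ relation, which is exactly what the YB-algebra axioms are engineered to permit.

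The main obstacle I anticipate is purely bookkeeping: writing $w_i$, $\chi_{ii}$ and the composite permutations appearing on each side of the compatibility identity in a form where one can see, strand by strand, that a reduced expression for the braid on the left can be transformed into one for the braid on the right using only the braid relations (which are automatic, being satisfied by $\sigma$) plus the ``$m$-absorption'' moves $(\mathrm{id}\otimes m)\sigma_1\sigma_2=\sigma(m\otimes\mathrm{id})$ and $(m\otimes\mathrm{id})\sigma_2\sigma_1=\sigma(\mathrm{id}\otimes m)$. A convenient device is to picture $A^{\otimes i}$ as a single ``thick strand'' of $i$ ordinary strands: the point of the interleaving permutation $w_i$ is precisely that $m_{\sigma,i}$, expressed on thick strands, looks formally like a single multiplication, and $\chi_{ii}$ on thick strands looks like a single crossing; once this is set up, the YB-algebra axioms for $(A,m,\sigma)$ on thin strands upgrade verbatim to YB-algebra axioms for $(A^{\otimes i},m_{\sigma,i},T^\sigma_{\chi_{ii}})$ on thick strands. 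I expect that making this ``thick strand'' picture rigorous — i.e.\ proving the compatibility of $w_i$ with $\chi_{ii}$ at the level of braid lifts — is where essentially all the work lies, and the unit conditions and the coalgebra half will then be routine.
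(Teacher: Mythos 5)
The paper does not prove this statement at all: it is quoted verbatim from Hashimoto--Hayashi \cite{HH}, Proposition~4.2, so there is no in-paper argument to compare against. Your ``thick strand'' (cabling) strategy is the standard proof of this fact and is the right approach: $T^\sigma_{\chi_{ii}}$ is the cabled crossing of two bundles of $i$ strands, $m_{\sigma,i}$ is the cabled multiplication, and the compatibility identities for the cabled data follow by repeatedly sliding a single $m$-vertex through a single crossing via $(\mathrm{id}_A\otimes m)\sigma_1\sigma_2=\sigma(m\otimes\mathrm{id}_A)$ and its mirror, together with the braid relations; the unit conditions and the coalgebra half by formal diagram reversal are indeed routine.

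Two points need attention before this becomes a proof. First, a YB algebra is in particular an associative unital algebra (the paper assumes associativity throughout Section~2), and you never address associativity of $m_{\sigma,i}$. This is not free: $m_{\sigma,i}\circ(m_{\sigma,i}\otimes\mathrm{id})=m_{\sigma,i}\circ(\mathrm{id}\otimes m_{\sigma,i})$ is exactly the associativity of the braided (twisted) tensor product of algebras, and its proof consumes the same two compatibility axioms for $(A,m,\sigma)$ that you reserve for the braiding--product compatibility; it should be stated and proved alongside them (and dually, coassociativity of $\bigtriangleup_{\sigma,i}$). Second, your reduction ``it suffices to check that the two underlying permutations coincide'' is not literally valid once $m$-vertices are inserted, because the well-definedness of $T^\sigma_w$ lives purely in the braid group and the $m$-absorption moves change the number of strands. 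You partially acknowledge this with your condition (b) on collision patterns, but that condition is where the entire content lies: one must exhibit, say by induction on $i$ peeling off one factor of $m$ from $m_{\sigma,i}=(m_{\sigma,i-1}\otimes m)\circ(\mathrm{id}^{\otimes i-1}\otimes T^\sigma_{\chi_{i-1,1}}\otimes\mathrm{id})$, an explicit sequence of braid relations and absorption moves converting one side into the other. As a proposal the plan is sound and would succeed, but the inductive bookkeeping you defer is the proof, not an afterthought.
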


We call $m_\sigma=m_{\sigma,2}$ the \emph{twisted algebra
structure} on $A\otimes A$ and
$\bigtriangleup_{\sigma}=\bigtriangleup_{\sigma,2}$ the
\emph{twisted coalgebra structure} on $C\otimes C$.

Let $(V,\sigma)$ be a braided vector space. For any $i,j\geq 1$,
we denote\[\chi_{ij}=\left(\begin{array}{cccccccc}
1&2&\cdots&i&i+1&i+2&\cdots & i+j\\
j+1&j+2&\cdots&j+i&1& 2 &\cdots & j
\end{array}\right),\] and define $\beta:T(V)\underline{\otimes} T(V)\rightarrow T(V)\underline{\otimes} T(V)$ by requiring that $\beta_{ij}=T^\sigma_{\chi_{ij}}$ on $V^{\otimes i}\underline{\otimes} V^{\otimes
j}$. For convenience, we denote by $\beta_{0i}$ and $\beta_{i0}$
the usual flip map $\tau$.

Then $(T(V),m,\beta)$ is a YB algebra, where $m$ is the
concatenation product.

Another example of YB algebras is the quantum shuffle algebra (see
\cite{Ro}). For a braided vector space $(V,\sigma)$, one can
constuct an associative algebra structure on $T(V)$ by: for any
$x_1,\ldots, x_{i+j}\in V$, $$(x_1\otimes\cdots\otimes
x_i)\mbox{{\cyr sh}}_{\sigma}(x_{i+1}\otimes\cdots\otimes
x_{i+j})=\sum_{w\in \mathfrak{S}_{i,j}}T_w(x_1\otimes\cdots\otimes
x_{i+j}).$$ The space $T(V)$ equipped with $\mbox{{\cyr
sh}}_{\sigma}$ is called the \emph{quantum shuffle algebra} and
denoted by $T_\sigma(V)$. We have that $(T_\sigma(V), \beta)$ is a
YB algebra.

We define $\delta$ to be the deconcatenation on $T(V)$, i.e.,
$$\delta(v_1\otimes\cdots\otimes v_n)=\sum_{i=0}^n(v_1\otimes\cdots\otimes v_i)\underline{\otimes }(v_{i+1}\otimes\cdots\otimes
v_n).$$ We denote by $T^c(V)$ the coalgebra
$(T(V),\delta,\varepsilon)$ where $\varepsilon$ is the projection
from $T(V)$ to $K$. The coalgebra $T^c(V)$ is the cotensor
coalgebra (see \cite{N}) over the trivial Hopf algebra $K$. Here
$V$ is a Hopf bimodule with scalar multiplication and coactions
defined by $\delta_L(v)=1\otimes v$ and $\delta_R(v)=v\otimes 1$
for any $v\in V$. $(T^c(V),\beta)$ is a YB coalgebra.

Now we review the construction of the quantum quasi-shuffle
algebra which was given as a special example of quantum
$B_\infty$-algebras in \cite{JR}.

Let $(V, \sigma)$ be a braided vector space and  for any $ p,
q\geq 0$, $M_{pq}:V^{\otimes p}\otimes V^{\otimes q}\rightarrow V$
be a linear map  such that
\[\left\{
\begin{array}{lllll}
M_{00}&=&0, \\
M_{10}&=&\mathrm{id}_V=M_{01},\\
M_{11}&=&m,\\
M_{pq}&=&0,\ \mathrm{otherwise}.
\end{array} \right.
\]
We denote $$\Join_\sigma=\varepsilon\otimes
\varepsilon+\sum_{n\geq 1}M^{\otimes n}\circ
\bigtriangleup_\beta^{(n-1)}:T^c(V)\underline{\otimes}
T^c(V)\rightarrow T^c(V),$$ where $M=(M_{pq})_{p,q\geq 0}$,
$\bigtriangleup_\beta=(\mathrm{id}_{T^c(V)}\otimes
\beta\otimes\mathrm{id}_{T^c(V)})\circ (\delta\otimes \delta)$ and
$\bigtriangleup_\beta^{(n)}=(\bigtriangleup_\beta\otimes
\mathrm{id}_{T^c(V)}^{\otimes 2(n-1)} )\circ
\bigtriangleup_\beta^{(n-1)}$ inductively. It is easy to show by
induction that the summation with respect to $n$ in the above
formula is finite. Indeed, since $M^{\otimes (i+j+1)}\circ
\bigtriangleup_\beta^{(i+j)}=\Big((M^{\otimes (i+j)}\circ
\bigtriangleup_\beta^{(i+j-1)})\otimes
M\Big)\circ\bigtriangleup_\beta$ and the conditions for $M$, we
have that $M^{\otimes (i+j+1)}\circ
\bigtriangleup_\beta^{(i+j)}(x\underline{\otimes}y)=0$, for any
$x\in V^{\otimes i}$ and $y\in V^{\otimes j}$.

To illustrate the new map $\Join_\sigma$, we calculate a few
examples. For any $u,v,w\in V$, we have
\begin{eqnarray*}
  u\Join_\sigma v&=&(\varepsilon\otimes
\varepsilon+M\circ \bigtriangleup_\beta^{(0)}+M^{\otimes 2}\circ
\bigtriangleup_\beta^{(1)})(u\underline{\otimes} v)
\\[3pt]
&=&M_{11}(u\underline{\otimes} v)\\[3pt]
&&+M^{\otimes 2}\Big(1\underline{\otimes} \beta_{10} (u\underline{\otimes }1)\underline{\otimes }v+1\underline{\otimes}\beta_{11}(u\underline{\otimes} v)\underline{\otimes} 1\\[3pt]
&&+u\underline{\otimes }\beta_{00}(1\underline{\otimes }1)\underline{\otimes} v+ u\underline{\otimes }\beta_{01}(1\underline{\otimes } v)\underline{\otimes } 1\Big)\\[3pt]
&=&M_{11}(u\underline{\otimes} v)+(M_{01}\otimes M_{10})(1\underline{\otimes }\sigma(u\underline{\otimes } v)\underline{\otimes } 1)\\[3pt]
&&+(M_{10}\otimes M_{01})((u\underline{\otimes }1)\underline{\otimes }(1\underline{\otimes} v))\\[3pt]
&=&M_{11}(u\underline{\otimes} v)+u\underline{\otimes} v+\sigma(u\underline{\otimes } v)\\[3pt]
&=&M_{11}(u{\otimes} v)+u\mbox{\cyr sh}_\sigma v,
\end{eqnarray*}
\begin{eqnarray*}
\lefteqn{(u\otimes v)\Join_\sigma w}\\
&=&(\varepsilon\otimes \varepsilon+M\circ\bigtriangleup_\beta^{(0)}+M^{\otimes 2}\circ \bigtriangleup_\beta^{(1)}+M^{\otimes 3}\circ \bigtriangleup_\beta^{(2)})((u\otimes v)\underline{\otimes} w) \\[3pt]
&=&M^{\otimes 2}\Big(1\underline{\otimes } \beta_{20}((u\otimes v)\underline{\otimes} 1)\underline{\otimes } w+u\underline{\otimes } \beta_{10}(v\underline{\otimes } 1)\underline{\otimes } w\\[3pt]
&&+(u\otimes v)\underline{\otimes } \beta_{00}(1\underline{\otimes } 1)\underline{\otimes } w+ 1\underline{\otimes } \beta_{21}((u\otimes v)\underline{\otimes} w)\underline{\otimes } 1\\[3pt]
&&+ u\underline{\otimes } \beta_{11}( v\underline{\otimes }
w)\underline{\otimes } 1+(u\otimes v)\underline{\otimes }
\beta_{01}(1\underline{\otimes } w)\underline{\otimes } 1\Big)\\[3pt]
&&+M^{\otimes 3}\Big(\bigtriangleup_\beta(u\underline{\otimes}
1)\underline{\otimes}(v\underline{\otimes}
w)+\bigtriangleup_\beta((u\otimes v)\underline{\otimes}
1)\underline{\otimes}(1\underline{\otimes}
w)\\[3pt]
&&+(\bigtriangleup_\beta\otimes id_{T^c(V)^{\otimes
2}})(1\underline{\otimes } \beta_{21}((u\otimes
v)\underline{\otimes} w)\underline{\otimes }
1)\\[3pt]
&&+(\bigtriangleup_\beta\otimes id_{T^c(V)^{\otimes
2}})(u\underline{\otimes }
\beta_{11}( v\underline{\otimes } w)\underline{\otimes } 1)\Big)\\[3pt]
&=&u\otimes M_{11}(v\underline{\otimes}w)+(M_{11}\otimes
M_{10})(u\underline{\otimes } \sigma( v\underline{\otimes }
w)\underline{\otimes }
1)\\
&&+M^{\otimes 3}\Big(u\underline{\otimes}
\beta_{10}(v\underline{\otimes } 1)\underline{\otimes }
1\underline{\otimes}(1\underline{\otimes}
w)+(\bigtriangleup_\beta\otimes id_{T^c(V)^{\otimes
2}})(u\underline{\otimes}
\beta_{11}( v\underline{\otimes} w)\underline{\otimes} 1)\Big)\\
&=&u\otimes M_{11}(v\underline{\otimes}w)+(M_{11}\otimes
\mathrm{id}_V)(u\otimes \sigma( v\otimes w))\\
&&+u\otimes v\otimes w+\sigma_2(u\otimes v\otimes
w)+\sigma_1\sigma_2(u\otimes v\otimes w)\\
&=&u\otimes M_{11}(v{\otimes}w)+(M_{11}\otimes
\mathrm{id}_V)(u\otimes \sigma( v\otimes w))+(u\otimes v
)\mbox{\cyr sh}_\sigma w,\end{eqnarray*} and
\begin{displaymath}
u\Join_\sigma(v\otimes w)= M_{11}(u{\otimes}v)\otimes w
+(\mathrm{id}_V\otimes M_{11})(\sigma( u\otimes v)\otimes
w)+u\mbox{\cyr sh}_\sigma (v\otimes w).\end{displaymath}

We denote by $\Join_{\sigma (i,j)}$ the restriction of
$\Join_\sigma$ on $V^{\otimes i}\underline{\otimes}V^{\otimes j}$.
Then we have the following inductive formula.

\begin{proposition}For $i,j>1$ and any $u_1,\ldots,
u_i,v_1,\ldots, v_j\in V$, we have
\begin{eqnarray}
\lefteqn{(u_1\otimes\cdots\otimes u_i)\Join_\sigma (v_1\otimes\cdots\otimes v_j)}\nonumber\\
&=&\Big((u_1\otimes\cdots\otimes u_i)\Join_\sigma (v_1\otimes\cdots\otimes v_{j-1})\Big)\otimes v_j\nonumber\\
&&+(\Join_{\sigma (i-1,j)}\otimes \mathrm{id}_V)\sigma_{i+j-1}\cdots\sigma_i(u_1\otimes\cdots\otimes u_i\otimes v_1\otimes\cdots\otimes v_j)\nonumber\\
&&+(\Join_{\sigma (i-1,j-1)}\otimes
m)\sigma_{i+j-2}\cdots\sigma_i(u_1\otimes\cdots\otimes u_i\otimes
v_1\otimes\cdots\otimes v_j).
\end{eqnarray}\end{proposition}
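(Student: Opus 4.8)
The plan is to compute $\Join_\sigma(x\underline{\otimes}y)$ directly from the defining expansion $\Join_\sigma=\varepsilon\otimes\varepsilon+\sum_{n\ge1}M^{\otimes n}\circ\bigtriangleup_\beta^{(n-1)}$, by peeling off at each stage the \emph{last} copy of $\bigtriangleup_\beta$ together with the \emph{last} copy of $M$. Write $x=u_1\otimes\cdots\otimes u_i$, $y=v_1\otimes\cdots\otimes v_j$ with $i,j>1$. Then $(\varepsilon\otimes\varepsilon)(x\underline{\otimes}y)=0$ and $M\circ\bigtriangleup_\beta^{(0)}(x\underline{\otimes}y)=M_{ij}(x\otimes y)=0$, so $\Join_\sigma(x\underline{\otimes}y)=\sum_{n\ge2}M^{\otimes n}\circ\bigtriangleup_\beta^{(n-1)}(x\underline{\otimes}y)$, a finite sum.

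The first step is to prove the re-factorization
\[
\bigtriangleup_\beta^{(n-1)}=\bigl(\bigtriangleup_\beta^{(n-2)}\otimes\mathrm{id}_{T^c(V)}^{\otimes2}\bigr)\circ\bigtriangleup_\beta\qquad(n\ge2),
\]
which follows by a short induction from the defining recursion $\bigtriangleup_\beta^{(m)}=(\bigtriangleup_\beta\otimes\mathrm{id}^{\otimes2(m-1)})\circ\bigtriangleup_\beta^{(m-1)}$ and needs no coassociativity. Writing $\bigtriangleup_\beta(x\underline{\otimes}y)=\sum x_{(1)}\underline{\otimes}\beta(x_{(2)}\underline{\otimes}y_{(1)})\underline{\otimes}y_{(2)}$ and $\beta(x_{(2)}\underline{\otimes}y_{(1)})=\sum\beta'\underline{\otimes}\beta''$ (so that $\beta'$ has tensor degree $|y_{(1)}|$ and $\beta''$ has tensor degree $|x_{(2)}|$), the re-factorization gives
\[
M^{\otimes n}\circ\bigtriangleup_\beta^{(n-1)}(x\underline{\otimes}y)=\sum\Bigl(M^{\otimes(n-1)}\circ\bigtriangleup_\beta^{(n-2)}(x_{(1)}\underline{\otimes}\beta')\Bigr)\otimes M(\beta''\underline{\otimes}y_{(2)}),
\]
the outer $\otimes$ being concatenation in $T(V)$. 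Summing over $n\ge2$ and using $\sum_{n\ge2}M^{\otimes(n-1)}\circ\bigtriangleup_\beta^{(n-2)}=\sum_{k\ge1}M^{\otimes k}\circ\bigtriangleup_\beta^{(k-1)}=\Join_\sigma-\varepsilon\otimes\varepsilon$ yields
\[
\Join_\sigma(x\underline{\otimes}y)=\sum\bigl(\Join_\sigma-\varepsilon\otimes\varepsilon\bigr)(x_{(1)}\underline{\otimes}\beta')\otimes M(\beta''\underline{\otimes}y_{(2)}).
\]

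Next I would run a case analysis on $M(\beta''\underline{\otimes}y_{(2)})$. Since $y_{(2)}$ is a suffix of $y$, $\beta''$ has degree $|x_{(2)}|$ with $x_{(2)}$ a suffix of $x$, and $M_{pq}=0$ unless $(p,q)\in\{(1,0),(0,1),(1,1)\}$, only the pairs $(x_{(2)},y_{(2)})\in\{(1,v_j),(u_i,1),(u_i,v_j)\}$ contribute. In the first case $y_{(1)}=v_1\otimes\cdots\otimes v_{j-1}$, $\beta(1\underline{\otimes}y_{(1)})=y_{(1)}\underline{\otimes}1$ (as $\beta_{0,j-1}=\tau$) and $M(1\underline{\otimes}v_j)=v_j$, giving $\Join_\sigma\bigl(x\underline{\otimes}(v_1\otimes\cdots\otimes v_{j-1})\bigr)\otimes v_j$. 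In the second, $x_{(1)}=u_1\otimes\cdots\otimes u_{i-1}$, $y_{(1)}=y$, $M(\beta''\underline{\otimes}1)=\beta''$, and the term is $(\Join_{\sigma(i-1,j)}\otimes\mathrm{id}_V)\bigl(u_1\otimes\cdots\otimes u_{i-1}\otimes\beta_{1,j}(u_i\otimes v_1\otimes\cdots\otimes v_j)\bigr)$. The third is analogous and gives $(\Join_{\sigma(i-1,j-1)}\otimes m)\bigl(u_1\otimes\cdots\otimes u_{i-1}\otimes\beta_{1,j-1}(u_i\otimes v_1\otimes\cdots\otimes v_{j-1})\otimes v_j\bigr)$. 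In all three cases $\beta'$ has positive degree ($j-1$ or $j$, and $j>1$), so the $\varepsilon\otimes\varepsilon$ correction vanishes and $\Join_\sigma$ may replace $\Join_\sigma-\varepsilon\otimes\varepsilon$.

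To finish, I would rewrite $\beta_{1,k}$ as an elementary braid: $\chi_{1,k}$ has exactly $k$ inversions and $\chi_{1,k}=s_ks_{k-1}\cdots s_1$ is a reduced expression, so $\beta_{1,k}=T^\sigma_{\chi_{1,k}}=\sigma_k\sigma_{k-1}\cdots\sigma_1$; tensoring on the left with $\mathrm{id}_V^{\otimes(i-1)}$ shifts indices and gives $\mathrm{id}_V^{\otimes(i-1)}\otimes\beta_{1,j}=\sigma_{i+j-1}\cdots\sigma_i$ and $\mathrm{id}_V^{\otimes(i-1)}\otimes\beta_{1,j-1}=\sigma_{i+j-2}\cdots\sigma_i$ on $V^{\otimes(i+j)}$ (the latter fixing the last strand, on which $v_j$ sits). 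Substituting turns the last two contributions into the second and third terms of formula~(1), and adding the first contribution completes the proof. I expect the real work to be (i) setting up the re-factorization induction with the index bookkeeping right, and above all (ii) checking that cases two and three produce no spurious braidings and that $m$ lands in exactly the last slot --- i.e.\ the identifications $\mathrm{id}^{\otimes(i-1)}\otimes\beta_{1,j}=\sigma_{i+j-1}\cdots\sigma_i$ and $\mathrm{id}^{\otimes(i-1)}\otimes\beta_{1,j-1}=\sigma_{i+j-2}\cdots\sigma_i$, which in turn rely only on the well-definedness of $T_w$ and the reduced-word computation for $\chi_{1,k}$.
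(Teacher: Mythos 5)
Your proposal is correct and follows essentially the same route as the paper: both peel off the last copy of $\bigtriangleup_\beta$ together with the last factor of $M$ (the re-factorization identity the paper states right after defining $\Join_\sigma$), then split into the three surviving cases $M_{01}$, $M_{10}$, $M_{11}$ and identify $\mathrm{id}_V^{\otimes(i-1)}\otimes\beta_{1,j}$ and $\mathrm{id}_V^{\otimes(i-1)}\otimes\beta_{1,j-1}$ with the products $\sigma_{i+j-1}\cdots\sigma_i$ and $\sigma_{i+j-2}\cdots\sigma_i$. The only cosmetic difference is your use of Sweedler-type notation where the paper writes the sums out explicitly.
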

\begin{proof}By the definition of $\Join_\sigma$, we have
\begin{eqnarray*}
\lefteqn{(u_1\otimes\cdots\otimes u_i)\Join_\sigma (v_1\otimes\cdots\otimes v_j)}\\
&=&(\varepsilon\otimes \varepsilon+\sum_{n\geq 1}M^{\otimes
n}\circ \bigtriangleup_\beta^{(n-1)})\Big((u_1\otimes\cdots\otimes
u_i)\underline{\otimes}(v_1\otimes\cdots\otimes v_j)\Big)\\
&=&\sum_{n=2}^{i+j}\Big((M^{\otimes (n-1)}\circ
\bigtriangleup_\beta^{(n-2)})\otimes
M\Big)\circ\bigtriangleup_\beta\Big((u_1\otimes\cdots\otimes
u_i)\underline{\otimes}(v_1\otimes\cdots\otimes v_j)\Big)\\
&=&\sum_{n=2}^{i+j}(M^{\otimes (n-1)}\circ
\bigtriangleup_\beta^{(n-2)})\Big((u_1\otimes\cdots\otimes
u_i)\underline{\otimes}(v_1\otimes\cdots\otimes
v_{j-1})\Big)\otimes
M_{01}(1\underline{\otimes}v_j)\\
&&+\sum_{n=2}^{i+j}\Big((M^{\otimes (n-1)}\circ
\bigtriangleup_\beta^{(n-2)})\otimes M_{10}\Big)\\
&&\ \ \ \ \ \ \ \ \ \ \circ(\mathrm{id}_V^{\otimes
i-1}\otimes\beta_{1j}\otimes\mathrm{id}_K)\Big((u_1\otimes\cdots\otimes
u_{i-1})\underline{\otimes}u_i\underline{\otimes}(v_1\otimes\cdots\otimes
v_{j})\underline{\otimes}1\Big)\\
&&+\sum_{n=2}^{i+j}\Big((M^{\otimes (n-1)}\circ
\bigtriangleup_\beta^{(n-2)})\otimes M_{11}\Big)\\
&&\ \ \ \ \ \ \ \ \ \ \circ(\mathrm{id}_V^{\otimes
i-1}\otimes\beta_{1,j-1}\otimes\mathrm{id}_V)\Big((u_1\otimes\cdots\otimes
u_{i-1})\underline{\otimes}u_i\underline{\otimes}(v_1\otimes\cdots\otimes
v_{j-1})\underline{\otimes}v_j\Big)\\
&=&\Big((u_1\otimes\cdots\otimes u_i)\Join_\sigma (v_1\otimes\cdots\otimes v_{j-1})\Big)\otimes v_j\\
&&+(\Join_{\sigma (i-1,j)}\otimes \mathrm{id}_V)\sigma_{i+j-1}\cdots\sigma_i(u_1\otimes\cdots\otimes u_i\otimes v_1\otimes\cdots\otimes v_j)\nonumber\\
&&+(\Join_{\sigma (i-1,j-1)}\otimes
m)\sigma_{i+j-2}\cdots\sigma_i(u_1\otimes\cdots\otimes u_i\otimes
v_1\otimes\cdots\otimes v_j).
\end{eqnarray*}\end{proof}
 It is
easy to see that $1\Join_\sigma x=x \Join_\sigma 1=x$ for any
$x\in T^c(V)$, where $1$ is the unit of $K$. But there is no
evidence that $(T^c(V),\Join_\sigma)$ is an associative algebra.
Actually, $\Join_\sigma$ is not associative in general. If the map
$M_{11}=m$ is an associative product and is compatible with the
braiding $\sigma$ in some sense, it comes true.
\begin{theorem}Under the above notations, $(T^c(V),\Join_\sigma,\beta)$ is
a YB algebra if and only if $(V,M_{11},\sigma)$ is a YB
algebra.\end{theorem}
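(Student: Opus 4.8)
The plan is to prove the two implications separately. Throughout I use freely the facts already recorded: $\beta$ is a braiding on $T(V)$ with $\beta(1\otimes x)=x\otimes1$ and $\beta(x\otimes1)=1\otimes x$ (since $(T(V),m,\beta)$ is a YB algebra and $(T^c(V),\beta)$ a YB coalgebra), and $1\Join_\sigma x=x\Join_\sigma1=x$. So the content lies entirely in the associativity of $\Join_\sigma$ together with the two cross-relations $(\mathrm{id}\otimes\Join_\sigma)\beta_1\beta_2=\beta(\Join_\sigma\otimes\mathrm{id})$ and $(\Join_\sigma\otimes\mathrm{id})\beta_2\beta_1=\beta(\mathrm{id}\otimes\Join_\sigma)$; and, for the other direction, in $M_{11}$ being associative and satisfying the two compatibility relations between $m$ and $\sigma$ from the definition of a YB algebra.

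\emph{Necessity.} Suppose $(T^c(V),\Join_\sigma,\beta)$ is a YB algebra. I would use the grading $T^c(V)=\bigoplus_{n\ge0}V^{\otimes n}$ together with two observations visible in the computations above: $\beta$ restricted to $V\otimes V$ is $\beta_{11}=\sigma$, and $\Join_\sigma$ maps $V^{\otimes p}\underline{\otimes}V^{\otimes q}$ into $\bigoplus_{\max(p,q)\le d\le p+q}V^{\otimes d}$, with degree-$1$ component equal to $M_{pq}$ (so in particular the degree-$1$ part of $u\Join_\sigma v$ is $M_{11}(u\otimes v)$, while $\Join_\sigma$ has no degree-$1$ output once $p\ge2$ or $q\ge2$). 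Projecting the associativity identity evaluated on $V^{\otimes3}$ onto degree $1$ yields $M_{11}(M_{11}\otimes\mathrm{id}_V)=M_{11}(\mathrm{id}_V\otimes M_{11})$; projecting the first cross-relation on $V^{\otimes3}$ onto bidegree $(1,1)$ in $T^c(V)\underline{\otimes}T^c(V)$ yields $(\mathrm{id}_V\otimes M_{11})\sigma_1\sigma_2=\sigma(M_{11}\otimes\mathrm{id}_V)$, and the second cross-relation gives $(M_{11}\otimes\mathrm{id}_V)\sigma_2\sigma_1=\sigma(\mathrm{id}_V\otimes M_{11})$. The remaining unit axiom is immediate. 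Hence $(V,M_{11},\sigma)$ is a YB algebra.

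\emph{Sufficiency.} Assume $(V,M_{11},\sigma)$ is a YB algebra. The key tool is the universal property of the cotensor coalgebra $T^c(V)$: the formula $\Join_\sigma=\varepsilon\otimes\varepsilon+\sum_{n\ge1}M^{\otimes n}\circ\bigtriangleup_\beta^{(n-1)}$ exhibits $\Join_\sigma$ as the cofree extension of its $V$-component $M$, so $\Join_\sigma$ is a coalgebra morphism from $(T^c(V)\underline{\otimes}T^c(V),\bigtriangleup_\beta)$ to $(T^c(V),\delta)$ with $\pi_1\circ\Join_\sigma=M$, where $\pi_1\colon T^c(V)\to V$ is the projection. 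I would then: (1) establish the two cross-relations with $\beta$, reducing them to the corresponding compatibility of the generating map $M$ with $\sigma$ on the finitely many graded pieces where $M\ne0$ --- this is exactly where the hypotheses $(\mathrm{id}_V\otimes M_{11})\sigma_1\sigma_2=\sigma(M_{11}\otimes\mathrm{id}_V)$ and $(M_{11}\otimes\mathrm{id}_V)\sigma_2\sigma_1=\sigma(\mathrm{id}_V\otimes M_{11})$ are consumed --- and propagating it through the iterated twisted coproducts $\bigtriangleup_\beta^{(n-1)}$ via the YB-coalgebra relations of $(\delta,\beta)$ ($\cite{HH}$, Prop.~4.2) and the well-definedness of the braid-group lifts $T^\sigma_w$ ($\cite{KT}$, Thm.~4.12); (2) deduce that $\Join_\sigma\otimes\mathrm{id}$ and $\mathrm{id}\otimes\Join_\sigma$ are coalgebra morphisms for the iterated twisted coproduct on $T^c(V)^{\otimes3}$ (itself a YB coalgebra by the cited Proposition), so that $\Join_\sigma(\Join_\sigma\otimes\mathrm{id})$ and $\Join_\sigma(\mathrm{id}\otimes\Join_\sigma)$ are both coalgebra morphisms $T^c(V)^{\otimes3}\to T^c(V)$; (3) conclude by the universal property that they coincide as soon as their $\pi_1$-components do, i.e.\ as soon as $M\circ(\Join_\sigma\otimes\mathrm{id})=M\circ(\mathrm{id}\otimes\Join_\sigma)$ as maps into $V$. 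Since $M$ lives in bidegrees $(1,0)$, $(0,1)$, $(1,1)$ and $\pi_1\circ\Join_\sigma=M$, this last equality reduces on $V^{\otimes3}$ to $M_{11}(M_{11}\otimes\mathrm{id}_V)=M_{11}(\mathrm{id}_V\otimes M_{11})$ and on the pieces carrying a degree-$0$ factor to the unit normalizations of $M$ --- all of which hold. Hence $\Join_\sigma$ is associative and $(T^c(V),\Join_\sigma,\beta)$ is a YB algebra.

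\emph{The main obstacle.} The delicate step is (1)--(2) of the sufficiency argument: verifying the $\beta$-cross-relations, and the bookkeeping needed to see that $\Join_\sigma\otimes\mathrm{id}$, $\mathrm{id}\otimes\Join_\sigma$, and hence the two triple products, are coalgebra morphisms, requires carrying the twisted coproduct $\bigtriangleup_\beta$ and the braid-group lifts through the inductive build-up of $\Join_\sigma$; and it is precisely here --- not in associativity, which becomes formal once the morphism structure is in place --- that the compatibility between $M_{11}$ and $\sigma$ is genuinely used. If one prefers to bypass the cofree-coalgebra formalism, an alternative is to prove associativity and both cross-relations \emph{simultaneously} by induction on the total tensor degree, peeling off the last tensor factor using the inductive formula for $\Join_\sigma$ established above and repeatedly invoking the YB-algebra relations of $(V,M_{11},\sigma)$ to reorganize the occurring copies of $\sigma$ and $M_{11}$, the base cases being the low-degree identities for $u\Join_\sigma v$, $(u\otimes v)\Join_\sigma w$ and $u\Join_\sigma(v\otimes w)$ computed above.
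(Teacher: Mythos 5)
Your proof is correct, and both halves track the paper closely. The necessity argument is essentially identical to the paper's: the paper expands $(u\Join_\sigma v)\Join_\sigma w$ and $u\Join_\sigma(v\Join_\sigma w)$ explicitly and then ``compares degrees'' to isolate $M_{11}(M_{11}\otimes\mathrm{id}_V)=M_{11}(\mathrm{id}_V\otimes M_{11})$ from the degree-one component, and extracts the two compatibility relations from the cross-relations on $V^{\otimes3}$ by the same degree comparison --- exactly your projection argument, and you are right that associativity alone only yields the degree-one identity, so the cross-relations are genuinely needed for the other two axioms. For sufficiency, the paper's displayed proof is precisely your ``alternative'': induction on $i+j+k$ using the inductive formula (1), after first establishing the two $\beta$-cross-relations from the compatibility of $M_{11}$ with $\sigma$ and of $\bigtriangleup_\beta$ with $\beta$. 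Your primary route via the universal property of the cotensor coalgebra (show $\Join_\sigma(\Join_\sigma\otimes\mathrm{id})$ and $\Join_\sigma(\mathrm{id}\otimes\Join_\sigma)$ are coalgebra morphisms agreeing on the $V$-component) is the approach of the cited reference \cite{JR}, which the paper explicitly defers to, and it is the same mechanism the paper itself deploys later in the proof of Proposition 12. What the universal-property route buys is that associativity becomes formal once the morphism structure is in place; what it costs is the need to verify that $T^c(V)^{\otimes3}$ with the iterated twisted coalgebra structure is connected (the analogue of the connectedness of $H\otimes H$ demanded in the definition of $CB_{\mathcal{YB}}$), a point you should state explicitly before invoking the uniqueness corollary. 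The inductive route avoids that but requires the bookkeeping with $T^\sigma_w$ that you correctly identify as the delicate step; either way the cross-relations must be established first, since they feed the induction.
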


\begin{proof}Let $(V,M_{11},\sigma)$ be a YB algebra. One can find a detailed proof in \cite{JR} for that $(T^c(V),\Join_\sigma,\beta)$ is
a YB algebra in a more general setting. Because of the simplicity
of the $M$ here, we can provide another proof by using the
relation (1). Since $(T^c(V),\beta)$ is a YB coalgebra, we know
that $(T^c(V)^{\otimes
2},\bigtriangleup_\beta,T^\beta_{\chi_{22}})$ is a YB coalgebra by
Proposition 2. By using the compatibility conditions for $M_{11}$
and $\sigma$ and those for $\bigtriangleup_\beta$ and $\beta$, it
is easy to prove that
\[\left\{
\begin{array}{lll}
\beta(\Join_\sigma \otimes \mathrm{id}_{T^c(V)})&=&(
\mathrm{id}_{T^c(V)}\otimes \Join_\sigma )\beta_1\beta_2 ,\\[3pt]
\beta(\mathrm{id}_{T^c(V)}\otimes
\Join_\sigma)&=&(\Join_\sigma\otimes \mathrm{id}_{T^c(V)}
)\beta_2\beta_1.
\end{array} \right.
\]
Now we show that $\Join_\sigma$ is associative, i.e., for any
$x\in V^{\otimes i}$, $y\in V^{\otimes j}$, $z\in V^{\otimes k}$,
we have $x\Join_\sigma(y\Join_\sigma z)=(x\Join_\sigma
y)\Join_\sigma z$. If $j=0$ or $k=0$, there is nothing to prove.
So we assume that $j,k\geq 1$, $y=y^\prime\otimes u$ and
$z=z^\prime\otimes v$, where $u, v\in V$ and $y^\prime,
z^\prime\in T^c(V)$. By the inductive formula (1) and the above
compatibility conditions for $\Join_\sigma$ and $\beta$, it is
easy to prove the statement by using induction on $i+j+k$.

Conversely, if $\Join_\sigma$ is associative, then for any
$u,v,w\in V$, we have
\begin{eqnarray*}
(u\Join_\sigma v)\Join_\sigma w&=&(M_{11}(u{\otimes} v)+u\mbox{\cyr sh}_\sigma v)\Join_\sigma w\\
&=&M_{11}(M_{11}(u{\otimes} v){\otimes} w)+M_{11}(u{\otimes} v)\mbox{\cyr sh}_\sigma w\\
&&+(\mathrm{id}_V\otimes M_{11})(u\mbox{\cyr sh}_\sigma
v{\otimes}w)+(M_{11}\otimes \mathrm{id})(\mathrm{id}_V\otimes
\sigma)(u\mbox{\cyr sh}_\sigma
v{\otimes} w)\\
&&+u\mbox{\cyr sh}_\sigma v
\mbox{\cyr sh}_\sigma w\\
&=&[M_{11}(M_{11}\otimes \mathrm{id}_V)+(
\mathrm{id}_V+\sigma)(M_{11}\otimes \mathrm{id}_V)\\
&&+(\mathrm{id}_V\otimes M_{11})((\mathrm{id}_V+\sigma)\otimes \mathrm{id}_V)\\
&&+(M_{11}\otimes
\mathrm{id}_V)(\sigma_2+\sigma_2\sigma_1)](u{\otimes} v{\otimes} w)\\
&&+u\mbox{\cyr sh}_\sigma v \mbox{\cyr sh}_\sigma w,
\end{eqnarray*}
and
\begin{eqnarray*}
u\Join_\sigma(v\Join_\sigma w)&=&u\Join_\sigma(M_{11}(v{\otimes} w)+v\mbox{\cyr sh}_\sigma w)\\
&=&M_{11}(u{\otimes} M_{11}(v{\otimes} w))+u\mbox{\cyr sh}_\sigma M_{11}(v{\otimes} w)\\
&&+( M_{11}\otimes\mathrm{id}_V)(u {\otimes} v\mbox{\cyr
sh}_\sigma w)+(\mathrm{id}_V\otimes
M_{11})(\sigma\otimes\mathrm{id}_V )(u{\otimes}
v \mbox{\cyr sh}_\sigma w))\\
&&+u\mbox{\cyr sh}_\sigma v
\mbox{\cyr sh}_\sigma w\\
&=&[M_{11}(\mathrm{id}_V\otimes M_{11})+(
\mathrm{id}_V+\sigma)(\mathrm{id}_V\otimes M_{11})\\
&&+(M_{11}\otimes \mathrm{id}_V)(\mathrm{id}_V\otimes (\mathrm{id}_V+\sigma))\\
&&+(\mathrm{id}_V
\otimes M_{11})(\sigma_1+\sigma_1\sigma_2)](u{\otimes} v{\otimes} w)\\
&&+u\mbox{\cyr sh}_\sigma v \mbox{\cyr sh}_\sigma w.
\end{eqnarray*}
Therefore we have that $(u\Join_\sigma v)\Join_\sigma
w=u\Join_\sigma (v\Join_\sigma w)$ if and only if
\begin{eqnarray*}
&&M_{11}(M_{11}\otimes \mathrm{id}_V)+M_{11}\otimes \mathrm{id}_V
+\sigma(M_{11}\otimes \mathrm{id}_V)\\
&&+\mathrm{id}_V\otimes M_{11}+(\mathrm{id}_V\otimes
M_{11})\sigma_1 +(M_{11}\otimes
\mathrm{id}_V)\sigma_2+(M_{11}\otimes
\mathrm{id}_V)\sigma_2\sigma_1\\
&=&M_{11}(\mathrm{id}_V\otimes M_{11})+\mathrm{id}_V\otimes
M_{11}+\sigma(\mathrm{id}_V\otimes M_{11})\\
&&+M_{11}\otimes \mathrm{id}_V+(M_{11}\otimes
\mathrm{id}_V)\sigma_2+(\mathrm{id}_V \otimes
M_{11})\sigma_1+(\mathrm{id}_V \otimes M_{11})\sigma_1\sigma_2,
\end{eqnarray*}
i.e.,
\begin{eqnarray*}
\lefteqn{M_{11}(M_{11}\otimes \mathrm{id}_V) +\sigma(M_{11}\otimes
\mathrm{id}_V)+(M_{11}\otimes
\mathrm{id}_V)\sigma_2\sigma_1}\\
&=&M_{11}(\mathrm{id}_V\otimes M_{11})+\sigma(\mathrm{id}_V\otimes
M_{11})+(\mathrm{id}_V \otimes M_{11})\sigma_1\sigma_2.
\end{eqnarray*}
By comparing the degrees of the resulting tensor vectors, we must
have $M_{11}(M_{11}\otimes
\mathrm{id}_V)=M_{11}(\mathrm{id}_V\otimes M_{11})$.

On $V{\otimes}V{\otimes}V$, the condition
$(\mathrm{id}_V\otimes\Join_\sigma)\sigma_1\sigma_2=\sigma(\Join_\sigma\otimes
\mathrm{id}_V)$ implies that $(\mathrm{id}_V\otimes
M_{11})\sigma_1\sigma_2+(\mathrm{id}_V\otimes\mbox{\cyr
sh}_\sigma)\sigma_1\sigma_2=\sigma(M_{11}\otimes
\mathrm{id}_V)+\sigma(\mbox{\cyr sh}_\sigma\otimes
\mathrm{id}_V)$. By comparing the degrees, we get
$(\mathrm{id}_V\otimes
M_{11})\sigma_1\sigma_2=\sigma(M_{11}\otimes \mathrm{id}_V)$.
Similarly, we have $(M_{11}\otimes
\mathrm{id}_V)\sigma_2\sigma_1=\sigma(\mathrm{id}_V\otimes
M_{11})$.
\end{proof}
The above resulting algebra $(T^c(V),\Join_\sigma)$ is called the
\emph{quantum quasi-shuffle algebra} over $(V,M_{11},\sigma)$.

\begin{remark}For an algebra $(A,m)$, the quantum quasi-shuffle algebra over the trivial YB algebra $(A,m,\tau)$ is just the classical quasi-shuffle algebra over the algebra $A$.\end{remark}

\section{Universal property and commutativity}
Let $(C,\bigtriangleup, \varepsilon)$ be a coalgebra with a
preferred group-like element $1_C\in C$. We denote
$\overline{\bigtriangleup}(x)=\bigtriangleup(x)-x\otimes
1_C-1_C\otimes x$ for any $x\in C$. The map
$\overline{\bigtriangleup}$ is also coassociative and called the
\emph{reduced coproduct}. We also denote
$\overline{C}=\mathrm{Ker}\varepsilon$. Then $C=K1_C\oplus
\overline{C}$ since $x-\varepsilon(x)1_C\in \overline{C}$ for any
$x\in C$.
\begin{definition}[\cite{Q}]The coalgebra $(C,\bigtriangleup)$ is said to be \emph{connected} if $C=\cup_{r\geq 0}F_r C$,
where\begin{eqnarray*}F_0C&=&K1_C,\\
F_rC&=&\{x\in C|\overline{\bigtriangleup}(x)\in F_{r-1}C\otimes
F_{r-1}C \},\ \ \mathrm{for\ r\geq
1}.\end{eqnarray*}\end{definition}

There is a well-known universal property for $T^c(V)$ (see, e.g.,
\cite{LoR}):
\begin{proposition}Given a connected coalgebra $(C,\bigtriangleup,\varepsilon)$ and a linear map $\phi:C\rightarrow V$ such that $\phi(1_C)=0$. Then there is a unique coalgebra morphism $\overline{\phi}:C\rightarrow T^c(V)$ which extends $\phi$, i.e., $Pr_V\circ \overline{\phi}=\phi$, where $Pr_V:T^c(V)\rightarrow V $ is the projection onto $V$. Explicitly, $\overline{\phi}=\varepsilon+\sum_{n\geq 1}\phi^{\otimes n}\circ\overline{\bigtriangleup}^{(n-1)}$, where $\overline{\bigtriangleup}^{(n)}=(\overline{\bigtriangleup}^{(n-1)}\otimes\mathrm{id}_C)\circ \overline{\bigtriangleup}$ inductively.\end{proposition}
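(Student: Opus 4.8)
The plan is to establish existence via the explicit formula and uniqueness by a degree-by-degree comparison, connectedness being used only to make the defining series locally finite. Throughout write $\overline{\phi}_n=\phi^{\otimes n}\circ\overline{\bigtriangleup}^{(n-1)}$, so that $\overline{\phi}_0=\varepsilon$ and $\overline{\phi}=\sum_{n\geq 0}\overline{\phi}_n$, and let $Pr_{V^{\otimes n}}:T^c(V)\to V^{\otimes n}$ be the projection onto the degree-$n$ part, so that $Pr_{V^{\otimes 0}}$ is the counit of $T^c(V)$ (this extends the notation $Pr_V$ of the statement).

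First I would record two properties of the reduced coproduct: (i) $\overline{\bigtriangleup}(\overline{C})\subseteq\overline{C}\otimes\overline{C}$, obtained by applying $\varepsilon\otimes\mathrm{id}_C$ and $\mathrm{id}_C\otimes\varepsilon$ to $\bigtriangleup(x)=\overline{\bigtriangleup}(x)+x\otimes 1_C+1_C\otimes x$ when $\varepsilon(x)=0$; and (ii) $\overline{\bigtriangleup}(1_C)=-1_C\otimes 1_C$, whence $\overline{\bigtriangleup}^{(n-1)}(1_C)=(-1)^{n-1}1_C^{\otimes n}$, so $\phi(1_C)=0$ forces $\overline{\phi}_n(1_C)=0$ for $n\geq 1$ and $\overline{\phi}(1_C)=1$. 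For well-definedness, take $x\in\overline{C}$; connectedness gives $x\in F_rC$ for some $r$. Using (i) and the decomposition $F_sC=K1_C\oplus(F_sC\cap\overline{C})$ one gets $\overline{\bigtriangleup}(F_sC\cap\overline{C})\subseteq(F_{s-1}C\cap\overline{C})^{\otimes 2}$ for $s\geq 1$, and an induction on $r$ using $\overline{\bigtriangleup}^{(r)}=(\overline{\bigtriangleup}^{(r-1)}\otimes\mathrm{id}_C)\circ\overline{\bigtriangleup}$ then yields $\overline{\bigtriangleup}^{(r)}(x)=0$, hence $\overline{\bigtriangleup}^{(n-1)}(x)=0$ for all $n\geq r+1$. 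Since $C=K1_C\oplus\overline{C}$, it follows that only finitely many $\overline{\phi}_n(x)$ are nonzero for each $x\in C$, so $\overline{\phi}:C\to T^c(V)$ is a well-defined linear map; and $Pr_V\circ\overline{\phi}=\overline{\phi}_1=\phi$ is immediate.

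Next I would check that $\overline{\phi}$ is a coalgebra morphism. The counit condition holds because $Pr_{V^{\otimes 0}}\circ\overline{\phi}=\overline{\phi}_0=\varepsilon$. For $\delta\circ\overline{\phi}=(\overline{\phi}\otimes\overline{\phi})\circ\bigtriangleup$ I would compare the components in $V^{\otimes p}\underline{\otimes}V^{\otimes q}$; by linearity and $C=K1_C\oplus\overline{C}$ it suffices to treat $x=1_C$ (both sides equal $1\underline{\otimes}1$) and $x\in\overline{C}$. For $x\in\overline{C}$, the $(p,q)$-component of $\delta(\overline{\phi}(x))$ is the splitting of $\overline{\phi}_{p+q}(x)=\phi^{\otimes(p+q)}\overline{\bigtriangleup}^{(p+q-1)}(x)$ after the $p$-th tensor factor; by coassociativity in the form $\overline{\bigtriangleup}^{(p+q-1)}=(\overline{\bigtriangleup}^{(p-1)}\otimes\overline{\bigtriangleup}^{(q-1)})\circ\overline{\bigtriangleup}$ (valid for $p,q\geq 1$) this equals $(\overline{\phi}_p\otimes\overline{\phi}_q)\overline{\bigtriangleup}(x)$ when $p,q\geq 1$, equals $\overline{\phi}_p(x)\underline{\otimes}1$ when $q=0$, and equals $1\underline{\otimes}\overline{\phi}_q(x)$ when $p=0$. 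On the other side, $(\overline{\phi}\otimes\overline{\phi})\bigtriangleup(x)=\overline{\phi}(x)\underline{\otimes}1+1\underline{\otimes}\overline{\phi}(x)+(\overline{\phi}\otimes\overline{\phi})\overline{\bigtriangleup}(x)$ since $\overline{\phi}(1_C)=1$; because $\overline{\bigtriangleup}(x)\in\overline{C}\otimes\overline{C}$ by (i) and $\overline{\phi}_0=\varepsilon$ kills $\overline{C}$, the last term contributes nothing in the $(p,0)$ and $(0,q)$ components, and one reads off exactly the components found on the left. Hence $\overline{\phi}$ is a coalgebra morphism extending $\phi$.

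Finally, uniqueness. Let $\psi:C\to T^c(V)$ be a coalgebra morphism with $Pr_V\circ\psi=\phi$, and put $\psi_n=Pr_{V^{\otimes n}}\circ\psi$, so $\psi=\sum_n\psi_n$ (with only finitely many $\psi_n(x)$ nonzero for each $x$) and $\psi_0=\varepsilon$ by the counit axiom. Since $(Pr_{V^{\otimes(n-1)}}\otimes Pr_V)\circ\delta$ restricts to the canonical identification $V^{\otimes n}\cong V^{\otimes(n-1)}\otimes V$ on the degree-$n$ part of $T^c(V)$ and vanishes in all other degrees, applying $Pr_{V^{\otimes(n-1)}}\otimes Pr_V$ to $\delta\circ\psi=(\psi\otimes\psi)\circ\bigtriangleup$ gives $\psi_n=(\psi_{n-1}\otimes\phi)\circ\bigtriangleup$ for every $n\geq 1$. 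Evaluating at $1_C$ and using $\phi(1_C)=0$ gives $\psi_n(1_C)=0$ for $n\geq 1$; substituting this back and expanding $\bigtriangleup(x)=\overline{\bigtriangleup}(x)+x\otimes 1_C+1_C\otimes x$, the summands $x\otimes 1_C$ and $1_C\otimes x$ are killed, so $\psi_n=(\psi_{n-1}\otimes\phi)\circ\overline{\bigtriangleup}$ for $n\geq 2$, while $\psi_1=\phi$. An immediate induction using $\overline{\bigtriangleup}^{(n-1)}=(\overline{\bigtriangleup}^{(n-2)}\otimes\mathrm{id}_C)\circ\overline{\bigtriangleup}$ then forces $\psi_n=\phi^{\otimes n}\circ\overline{\bigtriangleup}^{(n-1)}=\overline{\phi}_n$ for all $n$, so $\psi=\overline{\phi}$. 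The only genuinely delicate point --- and the sole place connectedness is used --- is the well-definedness step, that $\overline{\bigtriangleup}^{(n-1)}$ eventually annihilates every element of $C$; everything else is routine bookkeeping with the reduced coproduct, its coassociativity, and the grading of $T^c(V)$.
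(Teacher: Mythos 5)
Your proof is correct and complete. Note that the paper itself offers no proof of this proposition --- it is quoted as a well-known universal property of the cotensor coalgebra with a citation to Loday--Ronco --- so there is no argument of the paper's to compare against; what you have written is the standard argument (explicit formula, local nilpotence of $\overline{\bigtriangleup}$ on $\overline{C}$ via the filtration $F_rC$, and degree-by-degree identification for uniqueness), and all the delicate points (the inclusion $\overline{\bigtriangleup}(\overline{C})\subseteq\overline{C}\otimes\overline{C}$, the vanishing at $1_C$, the split into $(p,q)$-components) are handled correctly.
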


\begin{corollary}Let $C$ be a connected coalgebra. If $\Phi,\Psi:C\rightarrow T^c(V)$ are coalgebra maps such that $Pr_V\circ \Phi=Pr_V\circ \Psi$ and $Pr_V\circ\Phi(1_C)=0=Pr_V\circ\Psi(1_C)$, then $\Phi=\Psi$. \end{corollary}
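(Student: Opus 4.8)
The plan is to deduce this corollary directly from Proposition~8. First I would set $\phi := Pr_V \circ \Phi = Pr_V \circ \Psi : C \to V$, which by hypothesis satisfies $\phi(1_C) = 0$. The key observation is that both $\Phi$ and $\Psi$ are coalgebra maps from the connected coalgebra $C$ to $T^c(V)$ that extend $\phi$ in the sense of Proposition~8, i.e.\ $Pr_V \circ \Phi = \phi = Pr_V \circ \Psi$. By the uniqueness clause of Proposition~8, there is only one such coalgebra morphism, namely $\overline{\phi} = \varepsilon + \sum_{n \geq 1} \phi^{\otimes n} \circ \overline{\bigtriangleup}^{(n-1)}$. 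Hence $\Phi = \overline{\phi} = \Psi$.

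The only point needing a word of care is that Proposition~8 is stated for a linear map $\phi$ with $\phi(1_C) = 0$ and produces a coalgebra morphism extending it; to apply the \emph{uniqueness} part to $\Phi$ and $\Psi$ I should make sure both genuinely fall under its scope. For $\Phi$: it is a coalgebra map by hypothesis, and $Pr_V \circ \Phi = \phi$ by definition of $\phi$; moreover $Pr_V \circ \Phi(1_C) = 0$ is given, so $\Phi$ is the unique extension $\overline{\phi}$. The same applies verbatim to $\Psi$. Therefore $\Phi = \overline{\phi} = \Psi$, which is the claim.

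I do not expect any real obstacle here, since the corollary is essentially a restatement of the uniqueness half of Proposition~8. If one wanted a self-contained argument instead of invoking Proposition~8 as a black box, the substantive step would be to prove by induction on $r$ that $\Phi$ and $\Psi$ agree on each filtration piece $F_r C$: the base case $F_0 C = K1_C$ follows because coalgebra maps preserve the group-like element $1_C$ (both send it to $1 \in T^c(V)$), and the inductive step uses that for $x \in F_r C$ one has $\overline{\bigtriangleup}(x) \in F_{r-1}C \otimes F_{r-1}C$, so that the components of $\Phi(x)$ and $\Psi(x)$ in $V^{\otimes n}$ for $n \geq 2$ are determined by $\Phi, \Psi$ on $F_{r-1}C$ via compatibility with $\delta$, while the degree-$0$ and degree-$1$ components are fixed by $\varepsilon$ and by $Pr_V \circ \Phi = Pr_V \circ \Psi$. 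But given Proposition~8, the one-line deduction above suffices.
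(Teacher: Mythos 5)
Your proof is correct and is exactly the argument the paper intends (the paper leaves the corollary unproved precisely because it is this one-line consequence of the uniqueness clause of the universal property, stated as Proposition 7 in the paper's numbering): both $\Phi$ and $\Psi$ are coalgebra morphisms extending $\phi=Pr_V\circ\Phi=Pr_V\circ\Psi$ with $\phi(1_C)=0$, hence both coincide with $\overline{\phi}$. Your sketched induction on the filtration $F_rC$ is also the standard way one would prove the uniqueness clause itself, so no gap remains.
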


We will use the above properties to provide a universal property
of the quantum quasi-shuffle algebra $(T^c(V),\Join_\sigma)$ in
some category. First we describe the category in which we will
work.

\begin{definition}A quadruple $(H,\cdot,\bigtriangleup,\sigma)$ is called a \emph{twisted Yang-Baxter (YB for short) bialgebra} if

1. $(H,\cdot, \sigma)$ is a YB algebra,

2. $(H,\bigtriangleup, \sigma)$ is a YB coalgebra,

3. $\cdot: H\otimes H\rightarrow H$ is a coalgebra map, where
$H\otimes H$ is equipped with the twisted coalgebra structure. Or
equivalently, $\bigtriangleup: H\rightarrow H\otimes H$ is an
algebra map, where $H\otimes H$ is equipped with the twisted
algebra structure.
\end{definition}

From the condition 3 above, we have that
$\bigtriangleup(1_H)=1_H\otimes 1_H$.

\noindent\textbf{Examples}. 1. Let $(V,\sigma)$ be a braided
vector space. Then the quantum shuffle algebra $(T_\sigma(V),
\beta)$ equipped with the deconcatenation coproduct $\delta$ is a
twisted YB bialgebra (see \cite{Ro}).

2. Let $(V,m,\sigma)$ be a YB algebra. Then the quantum
quasi-shuffle algebra $(T^c(V),\Join_\sigma,\beta)$ is a twisted
YB bialgebra with the deconcatenation coproduct $\delta$ (see
\cite{JR}).

We denote by $CB_{\mathcal{YB}}$ the category of connected twisted
YB bialgebras. It consists of the following data:

1. the objects of $CB_{\mathcal{YB}}$ are the twisted YB
bialgebras $(H,\cdot,\bigtriangleup,\sigma)$ such that both $H$
and $H\otimes H$ are connected, where the preferred group-like
elements are $1_H$ and $1_H\otimes 1_H$ respectively, and
$H\otimes H$ is equipped with the twisted coalgebra structure;

2. a morphism $f$ from object $(H_1,\sigma_1)$ to object
$(H_2,\sigma_2)$ is both an algebra map and a coalgebra map and
satisfies that $(f\otimes f)\sigma_1=\sigma_2(f\otimes f)$.

It is easy to see that both $(T(V),\mbox{{\cyr
sh}}_{\sigma},\delta,\beta)$ and $(T^c(V),\Join_\sigma,\delta,
\beta)$ are in $CB_{\mathcal{YB}}$.

\begin{lemma}Let $(V_1, \sigma_1)$ and $(V_2, \sigma_2)$ be two braided vector spaces and $f: V_1\rightarrow V_2$ be a morphism of braided vector spaces, i.e. a linear map such that $\sigma_2(f\otimes f)=(f\otimes f)\sigma_1$. Then for any $i,j\geq 1$, $T_{\chi_{ij}}^{\sigma_2}(f^{\otimes i}\otimes f^{\otimes j})=(f^{\otimes j}\otimes f^{\otimes i})T_{\chi_{ij}}^{\sigma_1}$.\end{lemma}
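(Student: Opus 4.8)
The plan is to reduce the statement to the elementary intertwining relation for a single braiding generator and then propagate it along a reduced expression for $\chi_{ij}$. Put $n=i+j$. As linear maps $V_1^{\otimes n}\to V_2^{\otimes n}$ one has $f^{\otimes i}\otimes f^{\otimes j}=f^{\otimes n}=f^{\otimes j}\otimes f^{\otimes i}$ (the block decomposition is only a way of recording which copy of $V_t^{\otimes n}$ we regard as source and target), so the claim is equivalent to $T_{\chi_{ij}}^{\sigma_2}\circ f^{\otimes n}=f^{\otimes n}\circ T_{\chi_{ij}}^{\sigma_1}$.

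First I would check the single-generator case. For $1\le k\le n-1$ and $t\in\{1,2\}$, let $(\sigma_t)_k\in\mathrm{End}(V_t^{\otimes n})$ denote the operator acting as $\sigma_t$ on the $k$-th and $(k+1)$-st factors and as the identity elsewhere. Since $f^{\otimes n}$ factors as $f^{\otimes(k-1)}\otimes(f\otimes f)\otimes f^{\otimes(n-k-1)}$, tensoring the hypothesis $\sigma_2(f\otimes f)=(f\otimes f)\sigma_1$ with $\mathrm{id}^{\otimes(k-1)}$ on the left and $\mathrm{id}^{\otimes(n-k-1)}$ on the right yields immediately $(\sigma_2)_k\circ f^{\otimes n}=f^{\otimes n}\circ(\sigma_1)_k$.

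Next, fix a reduced expression $\chi_{ij}=s_{k_1}\cdots s_{k_l}$ in $\mathfrak{S}_n$; by the definition recalled in the Notations, $T_{\chi_{ij}}^{\sigma_t}=(\sigma_t)_{k_1}\cdots(\sigma_t)_{k_l}$, and this is independent of the chosen reduced word (via the braid relations, i.e. Theorem 4.12 in \cite{KT}). Now move $f^{\otimes n}$ across the product one generator at a time, applying the single-generator identity $l$ times: $f^{\otimes n}\circ T_{\chi_{ij}}^{\sigma_1}=f^{\otimes n}\circ(\sigma_1)_{k_1}\circ\cdots\circ(\sigma_1)_{k_l}=(\sigma_2)_{k_1}\circ f^{\otimes n}\circ(\sigma_1)_{k_2}\circ\cdots\circ(\sigma_1)_{k_l}=\cdots=(\sigma_2)_{k_1}\circ\cdots\circ(\sigma_2)_{k_l}\circ f^{\otimes n}=T_{\chi_{ij}}^{\sigma_2}\circ f^{\otimes n}$. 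Combined with the first paragraph, this is the assertion.

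There is essentially no obstacle here: the only place the hypothesis is used is the single crossing, and the rest is formal bookkeeping. The one point worth stating explicitly is the well-definedness of $T_w$, so that the computation does not depend on the choice of reduced word; equivalently, the propagation step can be phrased as an induction on the length of $\chi_{ij}$ (or on $i+j$), but the direct composition above is the cleanest route.
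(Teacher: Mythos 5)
Your proof is correct, but it takes a genuinely different route from the paper's. The paper argues by induction on $i+j$, using the block factorization $T_{\chi_{ij}}^{\sigma}=(T_{\chi_{i-1,j}}^{\sigma}\otimes \mathrm{id})(\mathrm{id}^{\otimes i-1}\otimes T_{\chi_{1,j}}^{\sigma})$ of the shuffle-type permutation $\chi_{ij}$ and peeling off one tensor factor of $f$ at a time; everything stays phrased in terms of the $\chi$ maps, which is the form in which the lemma is later applied. You instead observe that $f^{\otimes i}\otimes f^{\otimes j}=f^{\otimes n}=f^{\otimes j}\otimes f^{\otimes i}$ as maps $V_1^{\otimes n}\to V_2^{\otimes n}$, reduce to the single-crossing identity $(\sigma_2)_k\circ f^{\otimes n}=f^{\otimes n}\circ(\sigma_1)_k$, and push $f^{\otimes n}$ through a reduced word for $\chi_{ij}$ one generator at a time. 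Both arguments are sound; yours is more elementary in that the only nontrivial input is the defining relation $\sigma_2(f\otimes f)=(f\otimes f)\sigma_1$ plus the well-definedness of $T_w$ (already recorded in the Notations), and it is strictly more general: it shows $T_w^{\sigma_2}\circ f^{\otimes n}=f^{\otimes n}\circ T_w^{\sigma_1}$ for \emph{every} $w\in\mathfrak{S}_n$, i.e.\ $f$ intertwines the full braid group actions, of which the lemma is the special case $w=\chi_{ij}$. The paper's induction avoids any explicit mention of reduced expressions, trading generality for a formulation tailored to the recursive structure of $\beta$ that recurs throughout Sections 2--3; your version would serve equally well where the lemma is used (in the proof of the universal property) and could be stated once for all of $B_n$.
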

\begin{proof}We use induction on $i+j$.

When $i=j=1$, it is trivial.

For $i+j\geq 3$, we have
\begin{eqnarray*}
T_{\chi_{ij}}^{\sigma_2}(f^{\otimes i}\otimes f^{\otimes j})&=&(T_{\chi_{i-1,j}}^{\sigma_2}\otimes \mathrm{id}_{V_2})(\mathrm{id}_{V_2}^{\otimes i-1}\otimes T_{\chi_{1,j}}^{\sigma_2})(f^{\otimes i}\otimes f^{\otimes j})\\
&=&(T_{\chi_{i-1,j}}^{\sigma_2}\otimes \mathrm{id}_{V_2})(f^{\otimes i-1}\otimes T_{\chi_{1,j}}^{\sigma_2}(f\otimes f^{\otimes j}))\\
&=&(T_{\chi_{i-1,j}}^{\sigma_2}\otimes \mathrm{id}_{V_2})(f^{\otimes i-1}\otimes f^{\otimes j}\otimes f)(\mathrm{id}_{V_1}^{\otimes i-1}\otimes T_{\chi_{1,j}}^{\sigma_1})\\
&=&(f^{\otimes j}\otimes f^{\otimes i})(T_{\chi_{i-1,j}}^{\sigma_1}\otimes \mathrm{id}_{V_1})(\mathrm{id}_{V_1}^{\otimes i-1}\otimes T_{\chi_{1,j}}^{\sigma_1})\\
&=&(f^{\otimes j}\otimes f^{\otimes i})T_{\chi_{ij}}^{\sigma_1}.
\end{eqnarray*}
\end{proof}

\begin{lemma}Let $(C,\bigtriangleup,\sigma)$ be a YB coalgebra and $1_C$ be a group-like element of $C$. If $\sigma(1_C\otimes x)=x\otimes 1_C$ and $\sigma(x\otimes 1_C)=1_C\otimes x$ for any $x\in C$, then we have \[\left\{
\begin{array}{lll}
(\mathrm{id}_C\otimes \overline{\bigtriangleup})\sigma&=&\sigma_1\sigma_2( \overline{\bigtriangleup}\otimes\mathrm{id}_C) ,\\
(\overline{\bigtriangleup}\otimes\mathrm{id}_C)\sigma&=&\sigma_2\sigma_1(\mathrm{id}_C\otimes
\overline{\bigtriangleup}).
\end{array} \right.
\]\end{lemma}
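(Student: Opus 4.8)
The plan is to derive both identities directly from the two YB-coalgebra axioms for $\sigma$ by isolating the $1_C$-components of $\bigtriangleup$. By the very definition of the reduced coproduct we may write $\bigtriangleup=\overline{\bigtriangleup}+\rho+\lambda$, where $\rho,\lambda:C\to C\otimes C$ are the linear maps $\rho(c)=c\otimes 1_C$ and $\lambda(c)=1_C\otimes c$. On $C^{\otimes 3}$ write $\sigma_1=\sigma\otimes\mathrm{id}_C$ and $\sigma_2=\mathrm{id}_C\otimes\sigma$. Substituting this decomposition into the first axiom $(\mathrm{id}_C\otimes\bigtriangleup)\sigma=\sigma_1\sigma_2(\bigtriangleup\otimes\mathrm{id}_C)$ and cancelling the $\overline{\bigtriangleup}$-terms (which are exactly the two sides of the identity we want), it suffices to prove the ``correction identity''
\[
(\mathrm{id}_C\otimes\rho)\sigma+(\mathrm{id}_C\otimes\lambda)\sigma=\sigma_1\sigma_2(\rho\otimes\mathrm{id}_C)+\sigma_1\sigma_2(\lambda\otimes\mathrm{id}_C).
\]

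I would check this termwise; in fact the two summands on each side match separately. Evaluating on $a\otimes b$ and writing $\sigma(a\otimes b)=\sum_i a_i\otimes b_i$, one gets $(\mathrm{id}_C\otimes\rho)\sigma(a\otimes b)=\sum_i a_i\otimes b_i\otimes 1_C$, while $\sigma_1\sigma_2(\rho\otimes\mathrm{id}_C)(a\otimes b)=\sigma_1\sigma_2(a\otimes 1_C\otimes b)=\sigma_1(a\otimes b\otimes 1_C)=\sum_i a_i\otimes b_i\otimes 1_C$, where the middle equality uses the hypothesis $\sigma(1_C\otimes b)=b\otimes 1_C$. Similarly $(\mathrm{id}_C\otimes\lambda)\sigma(a\otimes b)=\sum_i a_i\otimes 1_C\otimes b_i$, and $\sigma_1\sigma_2(\lambda\otimes\mathrm{id}_C)(a\otimes b)=\sigma_1\big(1_C\otimes\sigma(a\otimes b)\big)=\sum_i a_i\otimes 1_C\otimes b_i$, now using $\sigma(1_C\otimes a_i)=a_i\otimes 1_C$. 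This yields the correction identity, hence the first asserted formula. The second formula is entirely parallel: one substitutes the same decomposition of $\bigtriangleup$ into $\sigma_2\sigma_1(\mathrm{id}_C\otimes\bigtriangleup)=(\bigtriangleup\otimes\mathrm{id}_C)\sigma$ and verifies $\sigma_2\sigma_1(\mathrm{id}_C\otimes\rho)=(\rho\otimes\mathrm{id}_C)\sigma$ and $\sigma_2\sigma_1(\mathrm{id}_C\otimes\lambda)=(\lambda\otimes\mathrm{id}_C)\sigma$ by the same computation, this time moving $1_C$ through $\sigma$ by means of $\sigma(x\otimes 1_C)=1_C\otimes x$.

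There is no real obstacle here; the only point requiring care is keeping track of which tensor slot the factor $1_C$ occupies at each stage, so that the hypotheses $\sigma(1_C\otimes x)=x\otimes 1_C$ and $\sigma(x\otimes 1_C)=1_C\otimes x$ are applied in the correct position when sliding $1_C$ across $\sigma_1$ or $\sigma_2$. It is worth noting that this argument uses neither the counit conditions in the definition of a YB coalgebra nor the connectedness of $C$: only the two braiding axioms and the stated compatibility of $\sigma$ with $1_C$ enter.
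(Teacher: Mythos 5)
Your proof is correct, and it is precisely the ``direct computation'' the paper invokes without writing out: decomposing $\bigtriangleup=\overline{\bigtriangleup}+\rho+\lambda$ and sliding $1_C$ through $\sigma_1$ or $\sigma_2$ via the two compatibility hypotheses is exactly what is needed, and each of your four correction identities checks out. Your closing observation that neither the counit axioms nor connectedness is used is also accurate.
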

\begin{proof}It follows direct computations.\end{proof}

Let $(V,m,\sigma)$ be a YB algebra. We have the following
universal property in $CB_{\mathcal{YB}}$.

\begin{proposition}For any $(H,\cdot,\bigtriangleup_H,\alpha)\in CB_{\mathcal{YB}}$ and a linear map $f:H\rightarrow V$ such that $m\circ (f\otimes f)=f\circ \cdot$ on $\overline{H}\otimes \overline{H}$, $f(1_H)=0$ and $(f\otimes f)\alpha=\sigma(f\otimes f)$, there exists a unique morphism $\overline{f}:H\rightarrow (T^c(V),\Join_\sigma,\delta,\beta)$ which extends $f$.\end{proposition}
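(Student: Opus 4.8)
The plan is to build $\overline{f}$ as the canonical coalgebra extension of $f$ supplied by Proposition 7, and then to verify, in this order, that it intertwines the braidings and that it is multiplicative; uniqueness then comes for free.

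\textbf{Construction and uniqueness.} Since $H$ is connected and $f(1_H)=0$, Proposition 7 produces a unique coalgebra map $\overline{f}:H\to T^c(V)$ with $Pr_V\circ\overline{f}=f$, namely $\overline{f}=\varepsilon+\sum_{n\ge 1}f^{\otimes n}\circ\overline{\bigtriangleup}_H^{(n-1)}$ (a locally finite sum because $H$ is connected); since $1_H$ is group-like and $f(1_H)=0$ one gets $\overline{f}(1_H)=1$, so $\overline{f}$ is unital. Any morphism of $CB_{\mathcal{YB}}$ extending $f$ is in particular a coalgebra map $H\to T^c(V)$ lifting $f$ and vanishing on $1_H$, hence equals $\overline{f}$ by Corollary 8; this already settles uniqueness. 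It remains to prove $(\overline{f}\otimes\overline{f})\alpha=\beta(\overline{f}\otimes\overline{f})$ and $\overline{f}\circ\cdot=\Join_\sigma\circ(\overline{f}\otimes\overline{f})$.

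\textbf{Compatibility with the braidings (the crux).} Write $\overline{f}_n=f^{\otimes n}\circ\overline{\bigtriangleup}_H^{(n-1)}:H\to V^{\otimes n}$ for the homogeneous components of $\overline{f}$, with $\overline{f}_0=\varepsilon$. Reading off the $V^{\otimes i}\underline{\otimes}V^{\otimes j}$-component of both sides, the identity $(\overline{f}\otimes\overline{f})\alpha=\beta(\overline{f}\otimes\overline{f})$ is equivalent to $(\overline{f}_i\otimes\overline{f}_j)\alpha=T^{\sigma}_{\chi_{ji}}(\overline{f}_j\otimes\overline{f}_i)$ for all $i,j\ge 0$ (with $T^{\sigma}_{\chi_{0j}}=T^{\sigma}_{\chi_{j0}}=\tau$). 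Applying Lemma 10 with the indices exchanged, $T^{\sigma}_{\chi_{ji}}(f^{\otimes j}\otimes f^{\otimes i})=(f^{\otimes i}\otimes f^{\otimes j})T^{\alpha}_{\chi_{ji}}$, so this reduces to the $H$-internal identity
\[ (\overline{\bigtriangleup}_H^{(i-1)}\otimes\overline{\bigtriangleup}_H^{(j-1)})\alpha = T^{\alpha}_{\chi_{ji}}(\overline{\bigtriangleup}_H^{(j-1)}\otimes\overline{\bigtriangleup}_H^{(i-1)}), \]
a statement about the YB coalgebra $(H,\bigtriangleup_H,\alpha)$ alone. I would establish it by induction on $i+j$: the case $i=j=1$ reads $\alpha=\alpha$; the inductive step strips one outermost $\overline{\bigtriangleup}_H$ off one argument, carries $\alpha$ across it using Lemma 11 (the reduced-coproduct form of the YB-coalgebra relations, which is available precisely because $\alpha$ fixes the group-like $1_H$), and then quotes the inductive hypothesis, the boundary cases $i=0$ or $j=0$ being handled by the counit axioms for $\alpha$ together with $f(1_H)=0$. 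I expect this combinatorial induction — tracking the braid words $T^{\alpha}_{\chi_{ji}}$ as they pass through iterated coproducts — to be the main obstacle; everything else is formal.

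\textbf{Multiplicativity.} Granting the previous step, $\overline{f}\otimes\overline{f}$ is a coalgebra morphism from $H\otimes H$ with its twisted coproduct (Proposition 2) to $(T^c(V)\otimes T^c(V),\bigtriangleup_\beta)$; this follows in one line from the defining formula $\bigtriangleup_\beta=(\mathrm{id}\otimes\beta\otimes\mathrm{id})(\delta\otimes\delta)$ and its analogue for $H$, the fact that $\overline{f}$ is a coalgebra map, and $\beta(\overline{f}\otimes\overline{f})=(\overline{f}\otimes\overline{f})\alpha$. Hence the two maps $H\otimes H\to T^c(V)$ given by $x\underline{\otimes}y\mapsto\overline{f}(x\cdot y)$ and $x\underline{\otimes}y\mapsto\overline{f}(x)\Join_\sigma\overline{f}(y)$ are both coalgebra maps out of $H\otimes H$: the first because $\cdot:H\otimes H\to H$ is a coalgebra map (condition 3 of Definition 9) and $\overline{f}$ is one, the second because $\Join_\sigma:T^c(V)\otimes T^c(V)\to T^c(V)$ is a coalgebra map (Example 2 after Definition 9). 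They agree on $1_H\underline{\otimes}1_H$ (both give $1$), and after applying $Pr_V$ they agree everywhere: on $\overline{H}\underline{\otimes}\overline{H}$ the degree-one part of $Pr_V\circ\Join_\sigma$ is $M$, so — only $M_{11}=m$ surviving for inputs with no degree-zero component — $Pr_V(\overline{f}(x)\Join_\sigma\overline{f}(y))=m(f(x)\otimes f(y))=f(x\cdot y)=Pr_V(\overline{f}(x\cdot y))$ by hypothesis, while on $1_H\underline{\otimes}\overline{H}$ and $\overline{H}\underline{\otimes}1_H$ both reduce to $f$ by unitality. Since $H\otimes H$ is connected (this is part of $(H,\alpha)\in CB_{\mathcal{YB}}$), Corollary 8 forces the two maps to coincide, i.e. $\overline{f}$ is an algebra map. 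Thus $\overline{f}$ is a morphism in $CB_{\mathcal{YB}}$ extending $f$, and together with the uniqueness observed above this completes the proof.
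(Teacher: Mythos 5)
Your proposal is correct and follows essentially the same route as the paper: construct $\overline{f}$ by the cofree universal property (Proposition 7), verify $\beta(\overline{f}\otimes\overline{f})=(\overline{f}\otimes\overline{f})\alpha$ via Lemmas 10 and 11, and then compare the two coalgebra maps $h\otimes g\mapsto\overline{f}(h)\Join_\sigma\overline{f}(g)$ and $h\otimes g\mapsto\overline{f}(hg)$ through $Pr_V$ and Corollary 8 using the connectedness of $H\otimes H$. The iterated-coproduct identity you flag as the main obstacle is exactly the (unstated) extension of Lemma 11 that the paper also relies on, and your sketched induction on $i+j$ is the standard way to supply it.
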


\begin{proof}Observe that the condition on $f$ means that: $\forall x, y \in H, f(xy) = f(x)f(y) + \varepsilon (x)f(y) + \varepsilon (y)f(x)$. Since $f(1_H)=0$ and $H$ is connected, there is a unique coalgebra map $\overline{f}: H\rightarrow T^c(V)$ which extends $f$. More precisely, $\overline{f}=\varepsilon_H+\sum_{n\geq 1}f^{\otimes n}\circ \overline{\bigtriangleup_H}^{(n-1)}$.

We first prove that $\beta(\overline{f}\otimes
\overline{f})=(\overline{f}\otimes \overline{f})\alpha$. We only
need to verify it on $\overline{H}\otimes \overline{H}$. We have
\begin{eqnarray*}
\beta(\overline{f}\otimes
\overline{f})&=&\beta(\sum_{i,j\geq 1}(f^{\otimes i}\otimes f^{\otimes j})(\overline{\bigtriangleup_H}^{(i-1)}\otimes \overline{\bigtriangleup_H}^{(j-1)}))\\
&=&\sum_{i,j\geq 1}T_{\chi_{ij}}^{\sigma}(f^{\otimes i}\otimes f^{\otimes j})(\overline{\bigtriangleup_H}^{(i-1)}\otimes \overline{\bigtriangleup_H}^{(j-1)})\\
&=&\sum_{i,j\geq 1}(f^{\otimes j}\otimes f^{\otimes i})T_{\chi_{ij}}^{\alpha}(\overline{\bigtriangleup_H}^{(i-1)}\otimes \overline{\bigtriangleup_H}^{(j-1)})\\
&=&\sum_{i,j\geq 1}(f^{\otimes j}\otimes f^{\otimes i})(\overline{\bigtriangleup_H}^{(j-1)}\otimes \overline{\bigtriangleup_H}^{(i-1)})\alpha\\
&=&(\overline{f}\otimes \overline{f})\alpha,
\end{eqnarray*}
where the third and the forth equalities follow from Lemma 10 and
Lemma 11 respectively.

The next step is to prove that $\overline{f}$ is an algebra map.
We define two maps:
\begin{eqnarray*}
F_1: H\otimes H &\rightarrow& T^c(V),\\
 h\otimes g &\mapsto & \overline{f}(h)\Join_\sigma
 \overline{f}(g),
\end{eqnarray*}
and
\begin{eqnarray*}
F_2: H\otimes H &\rightarrow& T^c(V),\\
 h\otimes g &\mapsto & \overline{f}(hg).
\end{eqnarray*}
We claim that both $F_1$ and $F_2$ are coalgebra maps, where $
H\otimes H$ is equipped with the twisted coalgebra structure.

Indeed,
\begin{eqnarray*}
\delta \circ F_1&=&\delta\circ\Join_\sigma (\overline{f}\otimes \overline{f})\\
&=&(\Join_\sigma \otimes \Join_\sigma)\bigtriangleup_\beta (\overline{f}\otimes \overline{f})\\
&=&(\Join_\sigma \otimes \Join_\sigma)(\mathrm{id}_{T^c(V)}\otimes \beta \otimes \mathrm{id}_{T^c(V)})(\delta\otimes \delta)(\overline{f}\otimes \overline{f})\\
&=&(\Join_\sigma \otimes \Join_\sigma)(\mathrm{id}_{T^c(V)}\otimes \beta \otimes \mathrm{id}_{T^c(V)})(\delta\circ\overline{f}\otimes  \delta\circ \overline{f})\\
&=&(\Join_\sigma \otimes \Join_\sigma)(\mathrm{id}_{T^c(V)}\otimes \beta \otimes \mathrm{id}_{T^c(V)})(\overline{f}\otimes  \overline{f}\otimes\overline{f}\otimes \overline{f})(\bigtriangleup_H\otimes \bigtriangleup_H)\\
&=&(\Join_\sigma \otimes \Join_\sigma)(\overline{f}\otimes \beta(  \overline{f}\otimes\overline{f})\otimes \overline{f})(\bigtriangleup_H\otimes \bigtriangleup_H)\\
&=&(F_1\otimes F_1)(\mathrm{id}_H\otimes \alpha \otimes
\mathrm{id}_H)(\bigtriangleup_H\otimes \bigtriangleup_H)\\
&=&(F_1\otimes F_1)\bigtriangleup_{\alpha}.
\end{eqnarray*}
And
\begin{eqnarray*}
\delta \circ F_2&=&\delta\circ\overline{f}\circ\cdot\\
&=&( \overline{f}\otimes\overline{f})\circ\bigtriangleup_H\circ \cdot\\
&=&( \overline{f}\otimes\overline{f})(\cdot\otimes
\cdot)(\mathrm{id}_H\otimes \alpha \otimes
\mathrm{id}_H)(\bigtriangleup_H\otimes \bigtriangleup_H)\\
&=&(F_2\otimes F_2)\bigtriangleup_{ \alpha}.
\end{eqnarray*}
For any $h,g\in \overline{H}$, we have
\begin{eqnarray*}
Pr_V \circ F_1(h\otimes g)&=&Pr_V(\overline{f}(h)\Join_\sigma
 \overline{f}(g))\\
&=&Pr_V\big(\sum_{n\geq 1}M^{\otimes n}\bigtriangleup_\beta
^{(n-1)}(\overline{f}(h)\otimes
 \overline{f}(g))\big)\\
&=&M(\overline{f}(h)\otimes
 \overline{f}(g))\\
&=&\sum_{i,j\geq 1}M_{ij}((f^{\otimes i}\otimes f^{\otimes j})(\overline{\bigtriangleup_H}^{(i-1)}(h)\otimes \overline{\bigtriangleup_H}^{(j-1)}(g))\\
&=&M_{11}(f\otimes f)(h\otimes g)\\
&=&f\circ \cdot (h\otimes g)\\
&=&Pr_V\circ F_2(h\otimes g).
\end{eqnarray*}

Now, for any $h, g \in H$, write $h = \overline{h} +
\varepsilon(h)1, g = \overline{g} + \varepsilon(g)1$. We have:
$h\otimes g =\overline{h}\otimes \overline{g} +
\varepsilon(h)1\otimes \overline{g}+ \varepsilon(g)
\overline{h}\otimes 1 + \varepsilon(h) \varepsilon(g)1\otimes1$,
so $Pr_V \circ F_1(h\otimes g) = Pr_V \circ
F_1(\overline{h}\otimes\overline{ g}) +
\varepsilon(h)f(\overline{g}) + \varepsilon(g)f(\overline{h})$.
Also, $hg= \overline{h}\overline{g} + \varepsilon(h)\overline{g} +
\varepsilon(g)\overline{h} + \varepsilon(h)\varepsilon(g)1$, so
$Pr_V\circ F_2(h\otimes g)= Pr_V\circ F_2(\overline{h}\otimes
\overline{g}) + \varepsilon(h)f(\overline{g}) +
\varepsilon(g)f(\overline{h})$, and we have again equality. Since
$H\otimes H$ is connected with the twisted coalgebra structure,
$F_1=F_2$ follows from the Corollary 8.
\end{proof}

Now we begin to discuss the commutativity of quantum quasi-shuffle
algebras. In the classical case, if $A$ is a commutative algebra,
then the quasi-shuffle algebra built on $A$ is also commutative
(see, e.g., \cite{Hof, NR}). But for quantum quasi-shuffle
algebras, because of the complexity of the braiding, it is not
reasonable and  in fact not possible to require this even thought
the YB algebra is commutative in the usual sense. It is much more
suitable to discuss the commutativity in the sense of braided
category. This demands extra crucial conditions for the braiding
and the multiplication.

\begin{definition}A YB algebra $(A,m,\sigma)$ is called \emph{twisted
commutative} if $m\circ \sigma=m$.\end{definition}

\noindent\textbf{Examples}. 1. Let $(A,m)$ be an algebra. Then the
trivial YB algebra structure $(A,m,\tau)$ is twisted commutative
if and only if $A$ is commutative.

2. Let $V$ be a vector space over $\mathbb{C}$ with basis
$\{e_1,\ldots, e_N\}$. Take a nonzero scalar $q\in \mathbb{C}$. We
define a braiding $\sigma$ on $V$ by
\[\sigma(e_{i}\otimes e_{j})=\left\{
\begin{array}{lll}
e_{i}\otimes e_{i},&& i=j,\\
q^{-1}e_{j}\otimes e_{i},&& i<j,\\
q^{-1}e_{j}\otimes e_{i}+(1-q^{-2})e_{i}\otimes e_{j},&&i>j.
\end{array} \right.
\]
Then $\sigma$ satisfies the Iwahori's quadratic equation
$(\sigma-\mathrm{id}_{V\otimes
V})(\sigma+q^{-2}\mathrm{id}_{V\otimes V})=0.$ In fact, this
$\sigma$ is given by the $R$-matrix in the fundamental
representation of $U_q\mathfrak{sl}_N$. We denote
$\bigwedge_\sigma(V)=T(V)/I$, where $I$ is the ideal of $T(V)$
generated by $\mathrm{Ker}(\mathrm{id}_{V^{\otimes 2}}-\sigma)$.
By an easy computation, we get that
$\mathrm{Ker}(\mathrm{id}_{V\otimes
V}-\sigma)=\mathrm{Span}_{\mathbb{C}}\{ e_{i}\otimes
e_{i},q^{-1}e_{i}\otimes e_{j}+e_{j}\otimes e_{i}(i<j)\}$. We
denote by $e_{i_{1}}\wedge \cdots \wedge e_{i_{s}}$ the image of
$e_{i_{1}}\otimes \cdots \otimes e_{i_{s}}$ in
$\bigwedge_\sigma(V)$. So $\bigwedge_\sigma(V)$ is an algebra
generated by $(e_i)$ and the relations $e_i^2=0$ and $e_j\wedge
e_i=-q^{-1}e_i\wedge e_j$ if $i<j$. And the set $\{e_{i_{1}}\wedge
\cdots \wedge e_{i_{p}}|1\leq i_{1} < \cdots < i_{p}\leq N,1\leq
p\leq N\}$ forms a linear basis of $\bigwedge_\sigma(V)$. The
algebra $\bigwedge_\sigma(V)$ is called the \emph{quantum exterior
algebra} over $V$.

We denote the increasing set $(i_{1}, \ldots ,i_{s})$ by
$\underline{i}$ and so on. For $1\leq i_{1}<\cdots <i_{s}\leq N$
and $1\leq j_{1}<\cdots <j_{t}\leq N$, we denote
\[(i_{1},\cdots ,i_{s}|j_{1},\cdots ,j_{t})=\left \{
\begin{array}{lll}
0,&& ,\mathrm{if}\ \underline{i}\cap \underline{j}\neq \emptyset,\\
2\sharp \{(i_{k},j_{l})|i_{k}>j_{l}\}-st,&&\mathrm{otherwise}.
\end{array} \right.
\]
The \emph{q-flip} $\mathscr{T}=\bigoplus_{s,t} \mathscr{T}_{s,t}$:
$\bigwedge_\sigma(V)\otimes \bigwedge_\sigma(V)\rightarrow
\bigwedge_\sigma(V)\otimes \bigwedge_\sigma(V)$ is defined by: for
$1\leq i_{1}<\cdots <i_{s}\leq N$ and $1\leq j_{1}<\cdots
<j_{t}\leq N$,
$$\mathscr{T}_{s,t}(e_{i_{1}}\wedge \cdots \wedge e_{i_{s}}
\otimes e_{j_{1}}\wedge \cdots \wedge
e_{j_{t}})=(-q)^{(i_{1},\cdots ,i_{s}|j_{1},\cdots ,j_{t})}
e_{j_{1}}\wedge \cdots \wedge e_{j_{t}}\otimes e_{i_{1}}\wedge
\cdots \wedge e_{i_{s}}.
$$

Then $( \bigwedge_\sigma(V), \wedge, \mathscr{T})$ is a YB
algebra. Moreover it is twisted commutative.

\begin{lemma}Let $\sigma$ be a braiding on $V$ such that $\sigma^2=\mathrm{id}_V^{\otimes 2}$. Then the braiding $\beta$ on $T(V)$ also satisfies that $\beta^2=\mathrm{id}_{T(V)}^{\otimes 2}$.\end{lemma}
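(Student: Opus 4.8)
The plan is to reduce the statement on $T(V)$ to the given hypothesis on $V$ by working degree by degree. Recall that $\beta$ is defined by $\beta_{ij} = T^\sigma_{\chi_{ij}}$ on $V^{\otimes i}\underline{\otimes} V^{\otimes j}$, and that $\beta^2$ restricted to $V^{\otimes i}\underline{\otimes} V^{\otimes j}$ is the composite $\beta_{ji}\circ\beta_{ij} = T^\sigma_{\chi_{ji}}\circ T^\sigma_{\chi_{ij}}$, which lands back in $V^{\otimes i}\underline{\otimes} V^{\otimes j}$. So the claim amounts to showing $T^\sigma_{\chi_{ji}}T^\sigma_{\chi_{ij}} = \mathrm{id}_{V^{\otimes(i+j)}}$ for all $i,j\geq 0$ (the cases $i=0$ or $j=0$ being trivial since $\beta_{0i}=\beta_{i0}=\tau$ and $\tau^2=\mathrm{id}$).

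First I would translate into the braid group. The permutation $\chi_{ij}$ is the block transposition sending the first block of $i$ letters past the last block of $j$ letters; in $\mathfrak{S}_{i+j}$ one has $\chi_{ji}\circ\chi_{ij} = \mathrm{id}$, and moreover $\ell(\chi_{ji}) + \ell(\chi_{ij}) = 2ij = \ell(\chi_{ij}) + \ell(\chi_{ji})$ is \emph{not} additive in the way needed for the braid lift to automatically compose — so one cannot simply invoke $T_wT_{w'} = T_{ww'}$ when $\ell(ww')=\ell(w)+\ell(w')$. Instead I would use the hypothesis $\sigma^2 = \mathrm{id}_V^{\otimes 2}$: this means the braid group action on $V^{\otimes n}$ factors through the symmetric group $\mathfrak{S}_n$, i.e. $T^\sigma_w$ depends only on $w$ (not on the reduced expression, and with $T^\sigma_{s_i}$ an involution). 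Consequently $w\mapsto T^\sigma_w$ is a genuine group homomorphism $\mathfrak{S}_n\to \mathrm{GL}(V^{\otimes n})$, so $T^\sigma_{\chi_{ji}}T^\sigma_{\chi_{ij}} = T^\sigma_{\chi_{ji}\chi_{ij}} = T^\sigma_{\mathrm{id}} = \mathrm{id}$, which is exactly what we want.

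The one point that deserves care — and the main obstacle — is justifying that $\sigma^2 = \mathrm{id}$ really does force the braid action on $V^{\otimes n}$ to factor through $\mathfrak{S}_n$. The braid group $B_n$ is generated by $\sigma_1,\dots,\sigma_{n-1}$ with the braid relations, and $\mathfrak{S}_n$ is its quotient by the extra relations $s_i^2 = 1$. Since each $\sigma_i$ acts on $V^{\otimes n}$ as $\mathrm{id}^{\otimes(i-1)}\otimes\sigma\otimes\mathrm{id}^{\otimes(n-i-1)}$, the assumption $\sigma^2=\mathrm{id}_V^{\otimes 2}$ gives $\sigma_i^2 = \mathrm{id}_{V^{\otimes n}}$ for every $i$; hence the $B_n$-action kills all the additional relations and therefore descends to an $\mathfrak{S}_n$-action. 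In particular $T^\sigma_w$ is well defined independently of the chosen reduced word and composes homomorphically, so for $w = \chi_{ij}$ we get $(\beta^2)\big|_{V^{\otimes i}\underline{\otimes} V^{\otimes j}} = T^\sigma_{\chi_{ji}}T^\sigma_{\chi_{ij}} = T^\sigma_{\chi_{ji}\chi_{ij}} = \mathrm{id}$. Summing over all $i,j\geq 0$ yields $\beta^2 = \mathrm{id}_{T(V)}^{\otimes 2}$.

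Alternatively, if one prefers to avoid the quotient argument, one can prove $T^\sigma_{\chi_{ji}}T^\sigma_{\chi_{ij}} = \mathrm{id}$ by induction on $i+j$ using the recursion $\chi_{ij} = (\chi_{i-1,j}\times\mathrm{id})\circ(\mathrm{id}^{\otimes(i-1)}\times\chi_{1,j})$ already exploited in Lemma~10, together with the fact that $T^\sigma_{\chi_{1,j}} = \sigma_j\sigma_{j-1}\cdots\sigma_1$ satisfies $(\sigma_1\cdots\sigma_j)(\sigma_j\cdots\sigma_1) = \mathrm{id}$ by repeated cancellation $\sigma_k^2 = \mathrm{id}$; I would, however, favour the homomorphism argument above as it is both shorter and conceptually transparent.
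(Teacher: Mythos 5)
Your proof is correct, but it takes a genuinely different route from the paper. The paper proves $\beta_{ji}\circ\beta_{ij}=\mathrm{id}$ by induction on $i+j$: it factors $\beta_{ij}=(\mathrm{id}_V^{\otimes j-1}\otimes\beta_{i1})(\beta_{i,j-1}\otimes\mathrm{id}_V)$ and $\beta_{ji}=(\beta_{j-1,i}\otimes\mathrm{id}_V)(\mathrm{id}_V^{\otimes j-1}\otimes\beta_{1i})$, cancels the inner pair $\beta_{1i}\beta_{i1}$ and then the outer pair by the induction hypothesis --- essentially the ``alternative'' you sketch in your last paragraph. Your main argument instead observes that $\sigma^2=\mathrm{id}_V^{\otimes 2}$ forces the $B_n$-action $\sigma_i\mapsto\mathrm{id}^{\otimes(i-1)}\otimes\sigma\otimes\mathrm{id}^{\otimes(n-i-1)}$ to kill the relations $s_i^2=1$ and hence descend to a representation of $\mathfrak{S}_n$ under which $w\mapsto T^\sigma_w$ becomes a genuine group homomorphism, whence $T^\sigma_{\chi_{ji}}T^\sigma_{\chi_{ij}}=T^\sigma_{\chi_{ji}\chi_{ij}}=\mathrm{id}$ since $\chi_{ji}=\chi_{ij}^{-1}$. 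This is sound: you correctly identify that one cannot just invoke the additivity rule $T_wT_{w'}=T_{ww'}$ for $\ell(ww')=\ell(w)+\ell(w')$ (here $\ell(\chi_{ji}\chi_{ij})=0\neq 2ij$), and you correctly isolate where the hypothesis $\sigma^2=\mathrm{id}$ enters. Your approach is more conceptual and immediately gives the stronger statement that $T^\sigma$ is multiplicative on all of $\mathfrak{S}_n$, not just for the block transpositions; the paper's induction is more elementary and stays entirely within the recursive formalism for $\beta$ used elsewhere in the paper. One cosmetic point: well-definedness of $T^\sigma_w$ on reduced expressions already holds for an arbitrary braiding (Matsumoto's theorem, as the paper notes); what $\sigma^2=\mathrm{id}$ buys is precisely the descent to $\mathfrak{S}_n$ and the homomorphism property, which you do justify correctly in your third paragraph.
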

\begin{proof}We prove the statement for $\beta_{ij}$ by using induction on $i+j$.

When $i=j=1$, it is trivial since $\beta_{11}=\sigma$.

For $i+j\geq 3$, we have
\begin{eqnarray*}
\beta_{ji}\circ\beta_{ij}&=&(\beta_{j-1,i} \otimes
\mathrm{id}_V)(\mathrm{id}_V^{\otimes j-1}\otimes
\beta_{1i})(\mathrm{id}_V^{\otimes j-1}\otimes \beta_{i1})(\beta_{i,j-1}\otimes \mathrm{id}_V)\\
&=&\mathrm{id}_{T(V)}^{\otimes 2}.\end{eqnarray*}\end{proof}

If $\sigma=\pm \tau$, then $\sigma^2=\mathrm{id}_V^{\otimes 2}$.
For a general braiding $\sigma$, $\sigma^2$ is not necessarily the
identity map. The first nontrivial example where we nevertheless
have involution is the q-flip $\mathscr{T}$, i.e.,
$\mathscr{T}^2=Id$.

\begin{theorem}Let $(V,m, \sigma)$ be a YB algebra. Then the quantum quasi-shuffle algebra $(T^c(V),\Join_\sigma,\beta)$ is twisted commutative if and only if $(V,m, \sigma)$ is twisted commutative and $\sigma^2=\mathrm{id}_V^{\otimes 2}$.\end{theorem}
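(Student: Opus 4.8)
The plan is to treat the two directions separately: the forward direction by restricting the twisted-commutativity identity to tensor degree $2$, and the converse by the coalgebra-map uniqueness in Corollary 8.

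Suppose first that $(T^c(V),\Join_\sigma,\beta)$ is twisted commutative, i.e. $\Join_\sigma\circ\beta=\Join_\sigma$, and restrict this equality to $V\underline{\otimes}V$. On that subspace $\beta_{11}=\sigma$, and from the computation of $u\Join_\sigma v$ carried out above one has $a\Join_\sigma b=M_{11}(a\otimes b)+a\otimes b+\sigma(a\otimes b)$ for $a,b\in V$. Substituting $\sigma(u\otimes v)$ for $a\otimes b$, the identity becomes
\[
M_{11}\sigma(u\otimes v)+\sigma(u\otimes v)+\sigma^{2}(u\otimes v)=M_{11}(u\otimes v)+u\otimes v+\sigma(u\otimes v)
\]
in $V\oplus V^{\otimes 2}$. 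Comparing the components in $V=V^{\otimes 1}$ gives $M_{11}\sigma=M_{11}=m$, so $(V,m,\sigma)$ is twisted commutative, and comparing the components in $V^{\otimes 2}$ gives $\sigma^{2}=\mathrm{id}_{V}^{\otimes 2}$.

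Conversely, assume $m\circ\sigma=m$ and $\sigma^{2}=\mathrm{id}_{V}^{\otimes 2}$; by Lemma 14 the braiding $\beta$ on $T(V)$ then satisfies $\beta^{2}=\mathrm{id}_{T(V)}^{\otimes 2}$. I would compare the two linear maps $F=\Join_\sigma$ and $G=\Join_\sigma\circ\beta$ from $T^c(V)\underline{\otimes}T^c(V)$ to $T^c(V)$, the source being equipped with the twisted coalgebra structure $\bigtriangleup_\beta$; since $(T^c(V),\Join_\sigma,\delta,\beta)\in CB_{\mathcal{YB}}$, the source is a connected coalgebra with group-like element $1\underline{\otimes}1$. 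The map $F$ is a coalgebra map, $\delta\circ F=(F\otimes F)\circ\bigtriangleup_\beta$, because it is the multiplication of the twisted YB bialgebra $(T^c(V),\Join_\sigma,\delta,\beta)$. The key point is that $G$ is a coalgebra map as well; this follows from the identity $\bigtriangleup_\beta\circ\beta=(\beta\otimes\beta)\circ\bigtriangleup_\beta$, since then
\[
\delta\circ G=(\Join_\sigma\otimes\Join_\sigma)\circ\bigtriangleup_\beta\circ\beta=(\Join_\sigma\otimes\Join_\sigma)\circ(\beta\otimes\beta)\circ\bigtriangleup_\beta=(G\otimes G)\circ\bigtriangleup_\beta .
\]
To prove that identity, iterating the Yang-Baxter coalgebra axioms for $(T^c(V),\delta,\beta)$ gives $(\delta\otimes\delta)\circ\beta=T^\beta_{\chi_{22}}\circ(\delta\otimes\delta)$, where on $T^c(V)^{\otimes 4}$ one has $T^\beta_{\chi_{22}}=\beta_2\beta_1\beta_3\beta_2$ from the reduced expression $s_2s_1s_3s_2$ of $\chi_{22}$; composing on the left with $\mathrm{id}\otimes\beta\otimes\mathrm{id}=\beta_2$ yields $\bigtriangleup_\beta\circ\beta=\beta_2^{2}\beta_1\beta_3\beta_2\circ(\delta\otimes\delta)$, and the hypothesis $\beta^{2}=\mathrm{id}$ kills $\beta_2^{2}$, so $\bigtriangleup_\beta\circ\beta=\beta_1\beta_3\beta_2\circ(\delta\otimes\delta)$, which is also the value of $(\beta\otimes\beta)\circ\bigtriangleup_\beta=\beta_1\beta_3\circ\beta_2\circ(\delta\otimes\delta)$. (Equivalently: once $\beta^2=\mathrm{id}$ one works in a symmetric monoidal setting, where the braiding is automatically a morphism of tensor coalgebras.) This cancellation is the one place where $\sigma^{2}=\mathrm{id}_{V}^{\otimes 2}$ is genuinely used, and it is the main obstacle of the proof.

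It remains to verify the hypotheses of Corollary 8. Both $F$ and $G$ send $1\underline{\otimes}1$ to $1\Join_\sigma 1=1$, which $Pr_V$ annihilates. Since $\varepsilon\otimes\varepsilon$ is the only part of $\Join_\sigma$ of tensor degree $0$ and each summand $M^{\otimes n}\circ\bigtriangleup_\beta^{(n-1)}$ with $n\geq 2$ lands in $V^{\otimes n}$, one has $Pr_V\circ\Join_\sigma=M=(M_{pq})$, which is supported on the blocks $V^{\otimes i}\underline{\otimes}V^{\otimes j}$ with $i+j\leq 2$ and equals $M_{10}$, $M_{01}$, $M_{11}=m$ there; as $\beta$ preserves total tensor degree with $\beta_{00}=\beta_{10}=\beta_{01}=\tau$ and $\beta_{11}=\sigma$, the composite $Pr_V\circ G$ is supported on the same blocks and agrees with $Pr_V\circ F$ — immediately on the $(0,0)$, $(1,0)$, $(0,1)$ blocks, and on $V\underline{\otimes}V$ because $Pr_V\circ\Join_\sigma\circ\beta_{11}=m\circ\sigma=m$. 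By Corollary 8, $F=G$, that is $\Join_\sigma\circ\beta=\Join_\sigma$, which is exactly twisted commutativity of $(T^c(V),\Join_\sigma,\beta)$.
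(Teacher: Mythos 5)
Your argument is correct. The forward direction is exactly the paper's: restrict $\Join_\sigma\circ\beta=\Join_\sigma$ to $V\underline{\otimes}V$, where $\Join_{\sigma(1,1)}=M_{11}+\mathrm{id}_V^{\otimes 2}+\sigma$, and compare the components in $V$ and in $V^{\otimes 2}$. The converse, however, is genuinely different from the paper's. The paper proves $\Join_{\sigma(j,i)}\circ\beta_{ij}=\Join_{\sigma(i,j)}$ by induction on $i+j$, expanding $\beta_{ij}$ recursively and feeding it into the inductive formula (1); the identity $m\circ\sigma=m$ absorbs the third term and $\sigma^2=\mathrm{id}$ is hidden in the rearrangement of the $\beta$-factors. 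You instead reuse the universal-property machinery of Section 3: both $F=\Join_\sigma$ and $G=\Join_\sigma\circ\beta$ are coalgebra maps out of the connected coalgebra $(T^c(V)^{\otimes 2},\bigtriangleup_\beta)$ --- for $F$ this is the bialgebra axiom, and for $G$ it hinges on $\bigtriangleup_\beta\circ\beta=(\beta\otimes\beta)\circ\bigtriangleup_\beta$, which you correctly reduce to cancelling $\beta_2^2$ in $\beta_2 T^\beta_{\chi_{22}}=\beta_2^2\beta_1\beta_3\beta_2$ via Lemma 14 --- and then Corollary 8 reduces everything to checking $Pr_V\circ F=Pr_V\circ G$, i.e.\ to the blocks of tensor degree at most $2$, where only $m\circ\sigma=m$ is needed. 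Your route buys a cleaner separation of where the two hypotheses enter ($\sigma^2=\mathrm{id}$ makes $\beta$ a coalgebra morphism; $m\circ\sigma=m$ handles the degree-one projection) and avoids the bookkeeping of the induction, at the price of relying on the facts that $(T^c(V),\Join_\sigma,\delta,\beta)$ is a twisted YB bialgebra and that $T^c(V)\otimes T^c(V)$ is connected for $\bigtriangleup_\beta$; both are asserted in the paper (Example 2 after Definition 9 and the remark following the definition of $CB_{\mathcal{YB}}$), so the argument is complete.
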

\begin{proof}If $(T^c(V),\Join_\sigma,\beta)$ is twisted
commutative, then on $V\underline{\otimes}V$ we have
\begin{eqnarray*}
m+\mathrm{id}_V^{\otimes 2}+\sigma&=&m+\mbox{\cyr sh}_\sigma\\
&=&\Join_{\sigma (1,1)}\\
&=&\Join_{\sigma (1,1)}\circ\sigma\\
&=&m\circ\sigma+\sigma+\sigma^2.
\end{eqnarray*}
By comparing the degrees, we have that $m=m\circ \sigma$ and
$\sigma^2=\mathrm{id}_V^{\otimes 2}$.

Conversely, we use induction on $i+j$ where $i$ and $j$ are the
powers of $V^{\otimes i}\underline{\otimes}V^{\otimes j}$.

When $i=j=1$, it is trivial.

For $i+j\geq 3$, by the inductive relation (1), we have
\begin{eqnarray*}
\lefteqn{\Join_{\sigma (j,i)}\circ \beta_{ij} }\\
&=&(\Join_{\sigma (j,i-1)} \otimes \mathrm{id}_V)(\beta_{i-1,j}\otimes \mathrm{id}_V)(\mathrm{id}_V^{\otimes i-1}\otimes \beta_{1,j})\\
&&+(\Join_{\sigma (j-1,i)}\otimes \mathrm{id}_V)(\mathrm{id}_V^{\otimes j-1}\otimes \beta_{1,i})(\mathrm{id}_V^{\otimes j-1}\otimes \beta_{i,1})(\beta_{i,j-1}\otimes \mathrm{id}_V)\\
&&+(\Join_{\sigma (j-1,i-1)}\otimes m)(\mathrm{id}_V^{\otimes
j-1}\otimes
\beta_{1,i-1}\otimes \mathrm{id}_V)\\
&&\ \ \ \ \ \circ(\mathrm{id}_V^{\otimes j-1}\otimes
\beta_{i-1,1}\otimes \mathrm{id}_V)(\mathrm{id}_V^{\otimes
i+j-2}\otimes
\beta_{1,1})(\beta_{i,j-1}\otimes \mathrm{id}_V)\\
&=&(\Join_{\sigma (i-1,j)} \otimes \mathrm{id}_V)(\mathrm{id}_V^{\otimes i-1}\otimes \beta_{1,j})\\
&&+(\Join_{\sigma (j-1,i)}\otimes \mathrm{id}_V)(\beta_{i,j-1}\otimes \mathrm{id}_V)\\
&&+(\Join_{\sigma (j-1,i-1)}\otimes m\circ \sigma)(\beta_{i,j-1}\otimes \mathrm{id}_V)\\
&=&\Join_{\sigma (i,j)}.
\end{eqnarray*} \end{proof}

\section{Basis coming from Lyndon words}
In this section, we use quantum quasi-shuffle products and Lyndon
words to present a new linear basis of the tensor space $T(V)$ for
a special kind of YB algebras $(V,m,\sigma)$. Let $(V,m,\sigma)$
be a finite dimensional YB algebra with linear basis
$(e_1,\ldots,e_N)$ and braiding of the following form:
$\sigma(e_i\otimes e_j)=q_{ij}e_j\otimes e_i$, where $q_{ij}$'s
are powers of a fixed nonzero scalar $q\in K$ and $q$ is not a
root of unity. For example, $( \bigwedge_\sigma(V), \wedge,
\mathscr{T})$ is certainly such a YB algebra.

$T^+(V)=T(V)/K$ always has a $K$-linear basis
$$\mbox{(I)}=\{e_{i_1}\otimes\cdots\otimes e_{i_m}|m>0,1\leq
i_1,\ldots,i_m\leq N\}.$$ The length of
$e_{i_1}\otimes\cdots\otimes e_{i_m}$ is $m$ and is denoted by
$|e_{i_1}\otimes\cdots\otimes e_{i_m}|=m$.

Given a total ordering on $e_i$'s, for example, say
$e_1<e_2<\cdots<e_N$, then there is a total ordering on (I)
provided by the lexicographic ordering, with the convention that
$a\leq a\otimes b$ for $a,b\in T^+(V)$. Lyndon words of $T^+(V)$
are defined as follows.
\begin{definition}
 An element $p$ in \emph{(I)} is called a \emph{Lyndon word} if, for any splitting $p=a\otimes b$, with $a,b\in \mbox{\emph{(I)}}$, one has $p<b$.
\end{definition}

Every $p$ in (I) has a unique factorization with respect to Lyndon
words. More precisely, $p$ can be written in a unique way  as a
tensor product of minimal number of Lyndon words (see \cite{Lot}).
We call this the standard factorization of $p$. In fact,
$p=p_1\otimes\cdots\otimes p_r$, where $p_i$'s are Lyndon words,
is the standard factorization of $p$ if and only if $p_1\geq
p_2\geq \cdots\geq p_r$. Denote the set of Lyndon words in  (I) by
$L$. Then let
$$\mbox{(I)}^\prime=\{l_1\otimes\cdots\otimes l_r~|~l_i\in L, l_1\geq \cdots\geq l_r\},$$
 we have (I)$=$(I)$^\prime$.

\begin{proposition}The set $$\mbox{\emph{(II)}}=\{l_1\Join_\sigma\cdots\Join_\sigma l_r~|~l_i\in L,l_1\geq\cdots\geq
l_r\}$$ forms a $K$-linear basis of $T^+(V)$.
\end{proposition}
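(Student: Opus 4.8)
The plan is to show that the set $\mbox{(II)}$ is obtained from the known basis $\mbox{(I)}^\prime=\mbox{(I)}$ of $T^+(V)$ by a ``triangular with invertible diagonal'' change of coordinates, and then to conclude by linear algebra. Both $\mbox{(I)}^\prime$ and $\mbox{(II)}$ are indexed by the same data, a nonincreasing sequence $l_1\ge\cdots\ge l_r$ of Lyndon words, and to such a sequence $\mbox{(I)}^\prime$ attaches the concatenation $p=l_1\otimes\cdots\otimes l_r$, of which it is precisely the standard factorization. I would give the basis $\mbox{(I)}$ of $T^+(V)$ the total order ``compare lengths first, and within a fixed length compare lexicographically'', so that each $F_nT^+(V):=\bigoplus_{m\le n}V^{\otimes m}$ is spanned by an initial segment. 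The assertion to prove is
$$l_1\Join_\sigma\cdots\Join_\sigma l_r=c_{(l_1,\dots,l_r)}\,\big(l_1\otimes\cdots\otimes l_r\big)+(\text{a combination of strictly smaller words}),\qquad c_{(l_1,\dots,l_r)}\in K^{\times},$$
where the iterated product is meaningful because $\Join_\sigma$ is associative (Theorem~4, since $(V,m,\sigma)$ is a YB algebra). Granting this, for each $n$ the transition matrix between the finitely many products $l_1\Join_\sigma\cdots\Join_\sigma l_r$ of total length $\le n$ and the elements of $\mbox{(I)}^\prime$ of length $\le n$ is, in the chosen order, upper triangular with nonzero diagonal $(c_p)$, hence invertible; thus those products form a $K$-basis of $F_nT^+(V)$, and letting $n\to\infty$ yields the proposition.

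First I would reduce the assertion to a statement about the quantum shuffle. For homogeneous $x\in V^{\otimes i}$ and $y\in V^{\otimes j}$ one reads off from $\Join_\sigma=\sum_nM^{\otimes n}\circ\bigtriangleup_\beta^{(n-1)}$ (or from the inductive formula (1), whose only length-lowering term is the one carrying $m=M_{11}$) that $x\Join_\sigma y\in F_{i+j}T^+(V)$ and that its component in top degree $V^{\otimes i+j}$ --- the summand with $n=i+j$, which uses only the length-preserving maps $M_{10}=M_{01}=\mathrm{id}_V$ --- is exactly $x\,\mbox{\cyr sh}_\sigma\,y$. By associativity of $\Join_\sigma$ and of $\mbox{\cyr sh}_\sigma$ together with an easy induction on $r$, the top-degree component (length $n=|l_1|+\cdots+|l_r|$) of $l_1\Join_\sigma\cdots\Join_\sigma l_r$ is $l_1\,\mbox{\cyr sh}_\sigma\cdots\mbox{\cyr sh}_\sigma\,l_r$, while every other term has strictly smaller length and is therefore strictly smaller in the order. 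So it suffices to analyze the quantum shuffle of the $l_i$.

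Next I would pin down the leading term of this quantum shuffle. Since $\sigma(e_i\otimes e_j)=q_{ij}e_j\otimes e_i$, every $T_w$ sends a basis tensor to a monomial in the $q_{ij}$ times a permuted basis tensor, so $l_1\,\mbox{\cyr sh}_\sigma\cdots\mbox{\cyr sh}_\sigma\,l_r=\sum_w(\text{monomial})\,w$, the sum over all words $w$ that are shuffles of the letter-sequences of $l_1,\dots,l_r$. By the classical combinatorics of Lyndon words (\cite{Lot}) --- using that $l_1\ge\cdots\ge l_r$ with each $l_i$ Lyndon, i.e. that this is a standard factorization --- the lexicographically largest such shuffle is the concatenation $l_1\otimes\cdots\otimes l_r$ itself, every other $w$ being strictly smaller. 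Its coefficient $c_{(l_1,\dots,l_r)}$ collects the $q$-monomials of the shuffles reproducing the concatenation; these can only permute equal Lyndon blocks among themselves, and the leading-term computation (as in $e_i\,\mbox{\cyr sh}_\sigma\cdots\mbox{\cyr sh}_\sigma\,e_i=[k]_{q_{ii}}!\,e_i^{\otimes k}$ with $k$ factors, and as in the quantum exterior algebra example) gives $c_{(l_1,\dots,l_r)}=\prod_v[m_v]_{q_{v,v}}!$, where $[m]_x!:=\prod_{k=1}^m(1+x+\cdots+x^{k-1})$, the product running over the distinct Lyndon words $v$ occurring in $l_1,\dots,l_r$ with multiplicity $m_v$, and $q_{v,v}$ the scalar by which $\beta$ acts on $v\otimes v$ --- a monomial in the $q_{ij}$, so $q_{v,v}=q^{c_v}$ for some $c_v\in\mathbb{Z}$. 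Each factor $[m_v]_{q^{c_v}}!$ is nonzero: if $c_v=0$ it equals $m_v!\ne0$ because $\mathrm{char}\,K=0$; if $c_v\ne0$, then $q^{c_v}$ is not a root of unity since $q$ is not, so none of the factors $1+q^{c_v}+\cdots+q^{c_v(k-1)}=(q^{kc_v}-1)/(q^{c_v}-1)$ vanishes. Hence $c_{(l_1,\dots,l_r)}\in K^{\times}$, which is the displayed assertion.

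I expect the main obstacle to be exactly the two points just used: identifying the lexicographically leading word of the quantum shuffle of decreasing Lyndon words (the Lyndon-word combinatorics), and, above all, checking that its coefficient is a genuine nonzero scalar rather than an accidental zero --- this is the only place where the hypotheses that $q$ is not a root of unity and that $\mathrm{char}\,K=0$ enter. Everything else is bookkeeping with the length filtration and elementary linear algebra.
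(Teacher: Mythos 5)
Your proposal is correct and follows essentially the same route as the paper: reduce via the length filtration to the quantum shuffle products (the associated graded algebra), identify the concatenation $l_1\otimes\cdots\otimes l_r$ as the lexicographically leading word by the classical Lyndon-word combinatorics, and observe that its coefficient is a product of $q$-factorials that cannot vanish since $q$ is not a root of unity and $\mathrm{char}\,K=0$. Your coefficient $\prod_v[m_v]_{q_{v,v}}!$ is exactly the paper's $(n_1)_{Q_1}!\cdots(n_s)_{Q_s}!$, so no further comment is needed.
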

\begin{proof}
First we note that
  $(T^c(V),\Join_\sigma)$ is a filtered algebra with
  $$T^c(V)^{[n]}:=\bigoplus_{i=0}^{n} V^{\otimes i},~ T^c(V)^{[n]}\subsetneqq T^c(V)^{[n+1]},$$
  and
$$T^c(V)^{[m]}\Join_\sigma T^c(V)^{[n]}\subset T^c(V)^{[m+n]}.$$
And the quantum shuffle algebra $T_\sigma(V)$ is a graded algebra
with
$$T^{n}_\sigma(V)=V^{\otimes
n},~T_\sigma(V)=\bigoplus_{n=0}^{\infty}T_\sigma^{n}(V),$$and
$$T_\sigma^{m}(V)\mbox{\cyr sh}_\sigma T_\sigma^{n}(V)\subset T_\sigma^{m+n}(V).$$

 Moreover $T_\sigma(V)$ is the associated graded algebra
 of $(T^c(V),\Join_\sigma)$  with respect to the above filtration,
since for any $l_1\otimes\cdots\otimes l_r\in V^{\otimes n}$,
 $$ l_1\Join_\sigma\cdots\Join_\sigma
l_r=l_1\mbox{\cyr sh}_\sigma\cdots\mbox{\cyr sh}_\sigma l_r\quad
\mbox{mod}~ T^c(V)^{[n-1]}. $$

Hence (II) is a linear basis of $T^+(V)$ if and only if
$$\mbox{(III)}=\{l_1\mbox{\cyr sh}_\sigma\cdots\mbox{\cyr
sh}_\sigma l_r~|~l_i\in L,l_1\geq\cdots\geq l_r\}$$ is a basis of
$T^+(V)$. For $l_i\in L$ with $l_1\geq\cdots\geq l_r$, we have
$$l_1\mbox{\cyr sh}_\sigma\cdots\mbox{\cyr sh}_\sigma l_r=a l_1\otimes\cdots\otimes l_r+\sum_{
\begin{matrix}
a_w\in K, w\in \mbox{(I)},\\
w<l_1\otimes\cdots\otimes l_r
\end{matrix}
}a_w w,$$
 where $a$ is a  scalar. After collecting the same $l_i$'s, we rewrite
$l_1\otimes\cdots\otimes l_r=p_1^{\otimes n_1}\otimes\cdots\otimes
p_s^{\otimes n_s}$, where $p_i\mbox{'s}\in L\subset \mbox{(I)}$
and $p_1>\cdots>p_s$. Set $p_i=e_{j_1}\otimes\cdots\otimes
e_{j_{m_i}}$ and $Q_i=\prod_{k,l\in \{j_1,\cdots,j_{m_i}\}}q_{kl}$
for $1\leq i\leq s$. Then $a=(n_1)_{Q_1}!\cdots (n_s)_{Q_s}!$,
where$(n)_\nu=\frac{\nu^n-1}{\nu-1}$ and
$(n)_\nu!=(n)_\nu(n-1)_\nu\cdots (1)_\nu$. By the requirements for
$q_{ij}$, we have that $a$ never vanishes. Hence the
transformation matrix from (I)' to (III) is triangular with
nonzero entries on its main diagonal, which
 implies that (III) is a basis of $T^+(V)$.
\end{proof}

\section*{Acknowledgements}The authors would like to thank the
referees for useful comments and suggestions which improved the
presentation of this paper. R.-Q. Jian was partially supported by
the China-France Mathematics Collaboration Grant 34000-3275100
from Sun Yat-sen University.

\end{document}